\documentclass{article}

\usepackage{amssymb}
\usepackage{amsmath}
\usepackage{amsthm}
\usepackage{tikz-cd}
\usepackage[top=1 in, right=1 in, left=1 in, bottom=1 in]{geometry}
\usepackage{graphicx}
\graphicspath{ {./graphics_for_paper/} }
\usepackage{wrapfig}
\usepackage{caption}
\usepackage{subcaption}
\usepackage{enumitem}

\usepackage{algorithm}
\usepackage{algpseudocode}
\usepackage{authblk}
\usepackage{mathrsfs}
\usepackage{bigints}
\usepackage{relsize}

\usepackage{mathabx}

\usepackage{lipsum}

\usepackage[square,sort,comma]{natbib}

\usepackage{color}
\definecolor{MyDarkRed}{rgb}{0.5,0,0.1}
\definecolor{MyDarkBlue}{rgb}{0.1,0.1,0.5}
\definecolor{MyDarkGreen}{rgb}{0.1,0.5,0.1}
\definecolor{MyRed}{rgb}{1.0,0,0}
\definecolor{MyBlue}{rgb}{0,0,1.0}
\definecolor{MyGreen}{rgb}{0,0.8,0}
\definecolor{lightgray}{rgb}{0.96,0.96,0.96}
\definecolor{darkgray}{rgb}{0.4,0.4,0.4}
\definecolor{darkergray}{rgb}{0.25,0.25,0.25}
\usepackage[pdftex, colorlinks=true, urlcolor=MyDarkBlue, citecolor=MyDarkRed, linkcolor=MyDarkGreen, hyperfootnotes=false]{hyperref}

\newtheorem{definition}{Definition}
\newtheorem{theorem}{Theorem}[section]
\newtheorem{proposition}[theorem]{Proposition}
\newtheorem{lemma}[theorem]{Lemma}
\newtheorem{corollary}[theorem]{Corollary}

\newtheorem*{proposition*}{Proposition}

\theoremstyle{definition}
\newtheorem{example}{Example}[section]


\usepackage[framemethod=TikZ]{mdframed}

%

\newcommand\Item[1][]{%
	\ifx\relax#1\relax  \item \else \item[#1] \fi
	\abovedisplayskip=0pt\abovedisplayshortskip=0pt~\vspace*{-\baselineskip}}

\usepackage{etoolbox}

\mdfsetup{%
	middlelinewidth=2pt,
	roundcorner=10pt}

\definecolor{MyLightGray8}{rgb}{0.8,0.8,0.8} 
\definecolor{MyLightGray9}{rgb}{0.9,0.9,0.9} 
\definecolor{MyLightGray95}{rgb}{0.95,0.95,0.95} 

\definecolor{proofcolor}{rgb}{0.96,0.96,0.96}

\BeforeBeginEnvironment{proof}{%
	\begin{mdframed}[topline=false,
				rightline=false,
				bottomline=false,
				leftline=false,
				innerleftmargin=1ex,
				innerrightmargin=1ex,
				innertopmargin=1ex,
				innerbottommargin=1ex,
				backgroundcolor=proofcolor,
				]\hyphenchar\font=\defaulthyphenchar 
}
\AfterEndEnvironment{proof}{%
\end{mdframed}\hyphenchar\font=-1
}

\definecolor{lemmacolor}{rgb}{1.0,0.95,0.9}

\BeforeBeginEnvironment{lemma}{%
	\begin{mdframed}[topline=false,
		rightline=false,
		bottomline=false,
		leftline=false,
		innerleftmargin=1ex,
		innerrightmargin=1ex,
		innertopmargin=1ex,
		innerbottommargin=1ex,
		backgroundcolor=lemmacolor,
		]\hyphenchar\font=\defaulthyphenchar 
	}
	\AfterEndEnvironment{lemma}{%
	\end{mdframed}\hyphenchar\font=-1
}

\definecolor{corcolor}{rgb}{0.95,0.95,0.9}

\BeforeBeginEnvironment{corollary}{%
	\begin{mdframed}[topline=false,
		rightline=false,
		bottomline=false,
		leftline=false,
		innerleftmargin=1ex,
		innerrightmargin=1ex,
		innertopmargin=1ex,
		innerbottommargin=1ex,
		backgroundcolor=corcolor,
		]\hyphenchar\font=\defaulthyphenchar 
	}
	\AfterEndEnvironment{corollary}{%
	\end{mdframed}\hyphenchar\font=-1
}

\definecolor{propcolor}{rgb}{1.0,0.93,0.94}

\BeforeBeginEnvironment{proposition}{%
	\begin{mdframed}[topline=false,
		rightline=false,
		bottomline=false,
		leftline=false,
		innerleftmargin=1ex,
		innerrightmargin=1ex,
		innertopmargin=1ex,
		innerbottommargin=1ex,
		backgroundcolor=propcolor,
		]\hyphenchar\font=\defaulthyphenchar 
	}
	\AfterEndEnvironment{proposition}{%
	\end{mdframed}\hyphenchar\font=-1
}

\definecolor{thmcolor}{rgb}{1.0,0.90,0.91}

\BeforeBeginEnvironment{theorem}{%
	\begin{mdframed}[topline=false,
		rightline=false,
		bottomline=false,
		leftline=false,
		innerleftmargin=1ex,
		innerrightmargin=1ex,
		innertopmargin=1ex,
		innerbottommargin=1ex,
		backgroundcolor=thmcolor,
		]\hyphenchar\font=\defaulthyphenchar 
	}
	\AfterEndEnvironment{theorem}{%
	\end{mdframed}\hyphenchar\font=-1
}

\definecolor{defcolor}{rgb}{0.92,0.92,0.95}

\BeforeBeginEnvironment{definition}{%
	\begin{mdframed}[topline=false,
		rightline=false,
		bottomline=false,
		leftline=false,
		innerleftmargin=1ex,
		innerrightmargin=1ex,
		innertopmargin=1ex,
		innerbottommargin=1ex,
		backgroundcolor=defcolor,
		]\hyphenchar\font=\defaulthyphenchar 
	}
	\AfterEndEnvironment{definition}{%
	\end{mdframed}\hyphenchar\font=-1
}

\newcommand{\tr}[0]{\mathrm{Tr}}
\newcommand{\diag}[0]{\mathrm{diag}}

\newcommand{\cgeq}{\succcurlyeq}
\newcommand{\cleq}{\preccurlyeq}

\title{Trace of Multi-variable Matrix Functions \\ and its Application to Functions of Graph Spectrum}
\author{Subhrajit Bhattacharya\footnote{
		Lehigh University, Bethlehem, PA, U.S.A. E-mail: \href{mailto:sub216@lehigh.edu}{\texttt{sub216@lehigh.edu}}.\\
		This work was supported by AFOSR award number FA9550-23-1-0046.
	}}
\date{}

\begin{document}

\maketitle

\begin{abstract}\noindent
	Matrix extension of a scalar function of a single variable is well-studied in literature. Of particular interest is the trace of such functions. It is known that for diagonalizable matrices, $M$, the function $g(M) = \text{Tr}(f(M)) = \sum_{j=1}^n f(\mu_j)$ (where $\{\mu_j\}_{j=1,2,\cdots,n}$ are the eigenvalues of $M$) inherits the monotonocity and convexity properties of $f$ (\emph{i.e.}, for $g$ to be convex, $f$ need not be operator convex -- convexity is sufficient).
	In this paper we formalize the idea of matrix extension of a function of multiple variables, study the monotonicity and convexity properties of the trace, and thus show that a function of form $g(M) = \sum_{j_1=1}^n \sum_{j_2=1}^n \cdots \sum_{j_m=1}^n f(\mu_{j_1}, \mu_{j_2},\cdots, \mu_{j_m})$ also inherits the monotonocity and convexity properties of the multi-variable function, $f$.
	We apply these results to functions of the spectrum of the weighted Laplacian matrix of undirected, simple graphs.
\end{abstract}

\section{Introduction: Matrix Extension of a Scalar Function of a Single Variable}

In this section we present known results on matrix extension of a scalar function of a single variable.
We start with a few definitions.

\begin{definition}
	Let $\mathcal{M}_{\text{diag}^n}$ be the set of $n\times n$ diagonalizable matrices, $\mathcal{M}_{\text{norm}^n}$ be the set of $n\times n$ normal matrices, and $\mathcal{M}_{\text{herm}^n}$ be the set of $n\times n$ Hermitian matrices. Furthermore, given $S\subseteq \mathbb{C}$ (where $S$ can possibly be a subset of the real line), we define $\mathcal{M}_{\text{typ}^n}(S)$ to be the set of $n\times n$ matrices of type `$\text{typ}$' whose eigenvalues lie in $S$. 
\end{definition}

\begin{definition}[Partial Order based on Positive Semi-definiteness of Difference]
	For matrices $A,B$, we say $A \cgeq B$ (equivalently, $B \cleq A$) if $A-B$ is positive semi-definite (\emph{i.e.}, $\mathbf{x}^* (A-B) \mathbf{x} \geq 0, ~\forall \mathbf{x} \in \mathbb{C}^n$).
\end{definition}

\vspace{1em}\noindent
Evaluating a scalar function of a single variable at a matrix (also called extension of a scalar function to a matrix function) is well-studied in literature. 

\begin{definition}[Extension of a Scalar Function to Diagonalizable Matrices~\citep{bhatia}]\label{def:function-single-var}
Consider a function $f:S \rightarrow \mathbb{C}$ (where $S$ is a subset of $\mathbb{C}$, with the possibility that $S \subseteq \mathbb{R}$ and/or it's a real-valued function). Given a $n\times n$ diagonalizable matrix, $M = P \,\diag(\lambda_1, \lambda_2, \cdots, \lambda_n) P^{-1} \in \mathcal{M}_{\text{diag}^n}(S)$, we define the evaluation of $f$ at $M$ as $f(M) = P \,\,\diag(f(\lambda_1), f(\lambda_2), \cdots, f(\lambda_n)) \,P^{-1}$. This defines the extended function $f:\mathcal{M}_{\text{diag}^n}(S) \rightarrow \mathcal{M}_{\text{diag}^n}$. We denote both the original scalar function as well as the extended function by $f$, which can be disambiguated easily by the input to the function.
\end{definition}

\noindent
For most algebraic/analytic functions it can be shown that one can evaluate $f(M)$ just by substituting $x$ with $M$ in the given algebraic expression of $f$.
\begin{example}~

\begin{itemize}
    \item[i.] If $f(x) = x^p$ is a monomial, then $f(M) = M^p$. 
    This is because $M^p = (P \Lambda P^{-1}) (P \Lambda P^{-1}) \cdots (P \Lambda P^{-1}) = P \Lambda^p P^{-1} = P \,\diag_l(\lambda_l^p) \, P^{-1} = f(M)$ (here $\Lambda = \diag(\lambda_1, \lambda_2, \cdots, \lambda_n)$, and $\diag_l(a_l)$ is a compact notation for $\diag(a_1, a_2, \cdots, a_n)$).
    \item[ii.] If $f(x) = \sum_{k=0}^m a_k x^k$ is a polynomial, $f(M) = \sum_{k=0}^m a_k M^k$. 
    \item[iii.] If $f(x) = \frac{p(x)}{q(x)}$ is a rational function, where $p$ and $q$ are polynomials, then $f(M) = p(M) \left( q(M) \right)^{-1}$. The domain of the extended function is the set of all diagonalizable matrices with eigenvalues which are not roots of $q$.
    \item[iv.] If $f(x)$ is continuous in $[a,b]\subseteq \mathbb{R}$, one can construct a sequence of polynomials, $f_1, f_2, \cdots$ that converges uniformly to $f$ in $[a,b]$ (Stone-Weierstrass approximation theorem). Then $f(M)$ can be evaluated as $\lim_{k\rightarrow \infty} f_k(M)$. 
\end{itemize}
\end{example}
\noindent



\subsection{Trace of $f(M)$}

One scalar function of particular interest is the trace for $f(M)$.
From Definition~\ref{def:function-single-var} we note that $(\tr\circ f)(M) := \tr(f(M)) = \sum_{j=1}^n f(\lambda_j)$.

The following result is easy to show for a monomial, and hence a polynomial, $f$, and thus can be generalized to $\mathcal{C}^1$ functions on a compact subset of a real line using continuity arguments and Stone-Weierstrass approximation theorem.

\begin{proposition}[Derivative of Trace of a Differentiable function of $M$~\citep{carlen}] \label{lemma:derivative}
Consider real-eigenvalued diagonaliable matrix, $M\in \mathcal{M}_{\text{diag}^n}$, parametrized by a real parameter $t$ such that the elements of $M(t)$ are $\mathcal{C}^1$ in $t$.
Let $f:[a,b]\rightarrow\mathbb{R}$ be a function that is $\mathcal{C}^1$ in $[a,b]$. Then,
\begin{equation} \label{eq:trace-fun}
    \frac{d}{dt} (\tr\circ f)(M(t)) ~=~ \tr\left( f'(M(t)) \frac{dM(t)}{dt} \right) 
\end{equation}
for all $t$ such that $M(t)\in\mathcal{M}_{\text{diag}^n}([a,b])$, and $\left\|\frac{d M(t)}{dt}\right\|, \|(P(t))^{-1}\|$ are bounded (where $P(t)$ is the normalized modal matrix made up of the unit eigenvectors of $M(t)$ as its columns).
\end{proposition}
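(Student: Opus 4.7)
The plan is a three-stage Stone--Weierstrass bootstrap: establish the identity for monomials, extend to polynomials by linearity, and then promote to $\mathcal{C}^1$ functions by approximation. For $f(x) = x^p$, the ordinary product rule gives $\frac{d}{dt} M(t)^p = \sum_{k=0}^{p-1} M^k \dot M\, M^{p-1-k}$; applying the trace and invoking cyclic invariance, $\tr(M^k \dot M\, M^{p-1-k}) = \tr(M^{p-1}\dot M)$, collapses all $p$ terms to a single one, yielding $\frac{d}{dt}\tr(M^p) = p\,\tr(M^{p-1}\dot M) = \tr(f'(M)\dot M)$. Linearity of trace and derivative then extends the identity to every polynomial $f$.

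For a general $f \in \mathcal{C}^1[a,b]$, I would first approximate $f'$ uniformly on $[a,b]$ by a polynomial sequence $q_k$ (Stone--Weierstrass), and then define $f_k(x) := f(a) + \int_a^x q_k(s)\,ds$. By construction $f_k \to f$ and $f_k' = q_k \to f'$, both uniformly on $[a,b]$. The polynomial case already gives $\frac{d}{dt}\tr(f_k(M(t))) = \tr(f_k'(M(t))\dot M(t))$ for each $k$, so the remaining task is to pass both sides to the limit.

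Writing $M(t) = P(t)\Lambda(t) P(t)^{-1}$ with unit-norm eigenvector columns (so $\|P(t)\| \le \sqrt{n}$), one has $\|g(M(t)) - g_k(M(t))\| \le \|P(t)\|\,\|P(t)^{-1}\|\max_j|g(\lambda_j(t)) - g_k(\lambda_j(t))|$ for $g \in \{f, f'\}$. The hypothesis that $\|P(t)^{-1}\|$ stays bounded therefore upgrades the scalar uniform convergence on $[a,b]$ into uniform convergence of $f_k(M(t))$ and $f_k'(M(t))$ on the $t$-interval of interest; combined with the bound on $\|\dot M(t)\|$, this makes $\tr(f_k'(M(t))\dot M(t))$ converge uniformly in $t$, and $\tr(f_k(M(t)))$ converge pointwise. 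The standard real-analysis lemma (uniform convergence of derivatives plus pointwise convergence of values permits interchange with $\frac{d}{dt}$) then delivers the stated identity.

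The main obstacle is exactly this last interchange: uniform polynomial approximation of $f$ does not transmit to its derivative, which is why I approximate $f'$ first and recover $f_k$ by integration. The boundedness assumptions on $\|P(t)^{-1}\|$ and $\|\dot M(t)\|$ are what rule out the blow-up of the matrix calculus near points of eigenvalue coalescence, where the modal decomposition becomes singular; away from such points, the argument above goes through cleanly.
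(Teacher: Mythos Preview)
Your proposal is correct and follows essentially the same approach the paper indicates: the paper states just before Proposition~\ref{lemma:derivative} that the result ``is easy to show for a monomial, and hence a polynomial, $f$, and thus can be generalized to $\mathcal{C}^1$ functions \ldots\ using continuity arguments and Stone--Weierstrass,'' and the detailed proof of the multi-variable analogue (Proposition~\ref{prop:derivative-multi-var}) carries out exactly your three-stage bootstrap, including the trick of approximating the derivative first and integrating back up to obtain the $f_k$. The only cosmetic difference is that, for the final interchange, the paper bounds both sides of \eqref{eq:trace-fun} separately (invoking the eigenvalue-sum formula, smoothness of eigenvalues via the implicit function theorem, and Weyl's perturbation inequality), whereas you package the same estimates into the standard ``uniform convergence of derivatives implies differentiability of the limit'' lemma.
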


\begin{corollary}[Monotonic function~\citep{carlen}] \label{cor:monotonic}
Let $f:[a,b]\rightarrow\mathbb{R}$ be a function that is $\mathcal{C}^1$ and monotonically increasing (resp. decreasing) in $[a,b]$.
Consider real-eigenvalued diagonaliable matrix, $M\in \mathcal{M}_{\text{diag}^n}$, parametrized by a real parameter $t$.
Let 

\noindent
\[ \mathcal{T} = \left\{ t ~\Big|~ M(t)\in\mathcal{M}_{\text{diag}^n}([a,b]), ~\frac{d M(t)}{dt} \cgeq 0, \text{and $\left\|\frac{d M(t)}{dt}\right\|, \|(P(t))^{-1}\|$ are bounded} \right\} \]

\noindent
Then, $g(t) = \tr\left( f\left(M(t) \right) \right)$ is monotonically increasing (resp. decreasing) in $\mathcal{T}$.
\end{corollary}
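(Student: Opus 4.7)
The plan is to invoke Proposition~\ref{lemma:derivative} to obtain a closed form for $g'(t)$, and then show the resulting expression is sign-definite on $\mathcal{T}$.

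For $t\in\mathcal{T}$, the hypotheses of Proposition~\ref{lemma:derivative} hold by the very construction of $\mathcal{T}$: the spectrum of $M(t)$ lies in $[a,b]$, and the operator norms $\|\dot M(t)\|$ and $\|(P(t))^{-1}\|$ are bounded, so that
\[
    g'(t) \;=\; \tr\!\left(f'(M(t))\,\dot M(t)\right).
\]
The claim that $g$ is monotonically increasing on $\mathcal{T}$ then follows from the mean value theorem on any interval contained in $\mathcal{T}$ once I establish $g'(t)\ge 0$ for every such $t$ (the decreasing case being symmetric).

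Next, monotonic increase of $f$ on $[a,b]$ gives $f'(\mu)\ge 0$ for all $\mu\in[a,b]$. Writing $M(t) = P(t)\,\diag_j(\lambda_j(t))\,(P(t))^{-1}$ with all $\lambda_j(t)\in[a,b]$, Definition~\ref{def:function-single-var} yields $f'(M(t)) = P(t)\,\diag_j(f'(\lambda_j(t)))\,(P(t))^{-1}$, so $f'(M(t))$ shares the eigenbasis of $M(t)$ and has non-negative eigenvalues. Combined with the hypothesis that $\dot M(t)$ is Hermitian positive semi-definite, I would conclude $\tr(f'(M(t))\,\dot M(t))\ge 0$ as follows. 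In the setting where $f'(M(t))$ is itself Hermitian positive semi-definite (equivalently, where the eigenbasis $P(t)$ is unitary), it admits a Hermitian square root $A^{1/2}$, and cyclicity of the trace gives
\[
    \tr\!\bigl(A^{1/2}A^{1/2}\,\dot M(t)\bigr) \;=\; \tr\!\bigl(A^{1/2}\,\dot M(t)\,A^{1/2}\bigr) \;\ge\; 0,
\]
since $A^{1/2}\dot M(t)A^{1/2}$ is a conjugation of a psd matrix, hence psd, and the trace of a psd matrix is non-negative.

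The crux of the argument is this final inequality, which hinges on the standard fact that the trace of the product of two Hermitian positive semi-definite matrices is non-negative; every other step is an application of Proposition~\ref{lemma:derivative}, the sign of $f'$, and the functional calculus of Definition~\ref{def:function-single-var}. The monotonically decreasing case is handled by the same argument applied to $-f$ (so $(-f)'(\mu)\ge 0$ on $[a,b]$), yielding $g'(t)\le 0$ and hence $g$ is monotonically decreasing on $\mathcal{T}$.
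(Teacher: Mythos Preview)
Your approach is essentially the paper's (the paper does not prove this single-variable corollary separately, but its proof of Corollary~\ref{cor:monotonic-multivar} specializes to $m=1$): apply the derivative formula of Proposition~\ref{lemma:derivative}, then show $\tr\bigl(f'(M(t))\,\dot M(t)\bigr)\ge 0$ by writing one factor as a square and using cyclicity of the trace.

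The one substantive difference is \emph{which} factor you square-root. The paper writes $\dot M(t) = S(t)^2$ with $S(t)$ Hermitian positive semi-definite and concludes $\tr\bigl(S\,f'(M)\,S\bigr)\ge 0$; you instead take $A^{1/2}$ with $A=f'(M(t))$. The paper's choice is better motivated: the hypothesis $\dot M(t)\succcurlyeq 0$ already forces $\dot M(t)$ to be Hermitian psd, so its Hermitian square root exists without further assumptions. In contrast, $f'(M(t))=P(t)\,\diag_j\bigl(f'(\lambda_j)\bigr)\,P(t)^{-1}$ has non-negative eigenvalues but is Hermitian only when $P(t)$ is unitary---a restriction you explicitly flag but which is not part of the corollary's hypotheses on $\mathcal{M}_{\text{diag}^n}$. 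Factoring $\dot M$ instead of $f'(M)$ removes the need to invoke unitarity of $P(t)$ at the square-root step. (That said, the final non-negativity step in the paper's argument, namely that $S\,f'(M)\,S\succcurlyeq 0$, also implicitly uses that $f'(M)$ is Hermitian psd, so the two arguments have the same effective scope; the paper's version just localizes the assumption to one place rather than two.)
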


\begin{example}
	Consider $f(x) = x^2$ which is monotonically increasing in $\mathbb{R}_{\geq 0}$. Let $M(t) = M_0 + M_1 t$, where both $M_0$ and $M_1$ are symmetric positive semi-definite. Then the function $g(t) = \tr ((M_0 + M_1 t)^2)$ is monotonically increasing in $\mathbb{R}_{\geq 0}$.
\end{example}

\subsubsection{Hermitian Matrices}

We now specialize to $\mathcal{M}_{\text{norm}^n} \subset \mathcal{M}_{\text{diag}^n}$.
It is well-known that eigenvalues of convex combination of two normal matrices lie in the convex hull of the eigenvalues of the two matrices~\citep{bhatia}.
However, in general, convex combination of two normal matrices is not normal. So we further restrict ourselves to the set of Herminitian matrices, $\mathcal{M}_{\text{herm}^n} \subset \mathcal{M}_{\text{norm}^n}$, which is closed under linear combination. Most of these results can also be extended to skew-Hermitian matrices which are also closed under linear combination.

\begin{lemma} \label{lemma:convex-set}
	If $S\subseteq\mathbb{R}$ is a convex set ($S$ can be an open, a closed or a clopen interval), so is $\mathcal{M}_{\text{herm}^n}(S)$.
\end{lemma}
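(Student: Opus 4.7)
My plan is to take two arbitrary elements $A, B \in \mathcal{M}_{\text{herm}^n}(S)$ and a scalar $t \in [0,1]$, and show that $C := tA + (1-t)B$ lies in $\mathcal{M}_{\text{herm}^n}(S)$. The first thing to check is that $C$ is Hermitian, which is immediate from $C^* = t A^* + (1-t) B^* = tA + (1-t) B = C$, i.e., from the paper's already-noted fact that Hermitian matrices are closed under real linear combination. The nontrivial part is showing that every eigenvalue of $C$ lies in $S$.

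The key tool is the Rayleigh-quotient characterization of the spectrum of a Hermitian matrix. If $A$ has real eigenvalues $\lambda_1, \ldots, \lambda_n$ with an orthonormal eigenbasis $\{v_1, \ldots, v_n\}$, then for any unit vector $\mathbf{x} \in \mathbb{C}^n$ we can write
\[
    \mathbf{x}^* A \mathbf{x} = \sum_{i=1}^n |\langle v_i, \mathbf{x}\rangle|^2 \, \lambda_i,
\]
which is a convex combination of the $\lambda_i$'s (the coefficients are nonnegative and sum to $\|\mathbf{x}\|^2 = 1$). Since $\{\lambda_i\} \subseteq S$ and $S$ is convex, it follows that $\mathbf{x}^* A \mathbf{x} \in S$. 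The same reasoning gives $\mathbf{x}^* B \mathbf{x} \in S$.

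Next, for any unit vector $\mathbf{x}$, I will write $\mathbf{x}^* C \mathbf{x} = t(\mathbf{x}^* A \mathbf{x}) + (1-t)(\mathbf{x}^* B \mathbf{x})$; this is a convex combination of two elements of $S$, hence lies in $S$ by convexity of $S$. In particular, if $\mu$ is any eigenvalue of $C$ with corresponding unit eigenvector $\mathbf{u}$, then $\mu = \mathbf{u}^* C \mathbf{u} \in S$. Therefore every eigenvalue of $C$ lies in $S$, and $C \in \mathcal{M}_{\text{herm}^n}(S)$.

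The only step requiring any delicacy is the Rayleigh-quotient identity; everything else is formal manipulation of convex combinations. Once that identity is invoked, convexity of $\mathcal{M}_{\text{herm}^n}(S)$ follows directly from convexity of $S$, and the argument is uniform across the open, closed, and clopen (and unbounded) interval cases, since all that is used about $S$ is that it is closed under taking convex combinations of its elements.
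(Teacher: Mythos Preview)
Your proof is correct. The paper itself states this lemma without proof, so there is no paper argument to compare against directly. That said, your approach---expressing the Rayleigh quotient $\mathbf{x}^* A \mathbf{x}$ as a convex combination of eigenvalues via the orthonormal eigenbasis, and then applying convexity of $S$ twice---is exactly the technique the paper later uses in its proof of Proposition~\ref{prop:convexity-multivar}, so your argument is fully in the spirit of the paper's methods.
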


%

\begin{definition}[Operator Monotone and Operator Convex Functions~\citep{bhatia}]
	Suppose $S\subseteq \mathbb{R}$.
	\begin{itemize}
		\item A function $f:S\rightarrow\mathbb{R}$ is called operator monotone if $A \cgeq B ~\Rightarrow~ f(A) \cgeq f(B)$ for all $A,B\in \mathcal{M}_{\text{herm}^n}(S)$, for all $n\in\mathbb{Z}_{+}$.
		\item A function $f:S\rightarrow\mathbb{R}$ is called operator convex in if 
		$ f\left( (1-\alpha)A + \alpha B \right) ~\cgeq~ (1-\alpha) f(A)+\alpha f(B) $
		for all $A,B\in \mathcal{M}_{\text{herm}^n}(S)$, for all $n\in\mathbb{Z}_{+}$.
	\end{itemize}
\end{definition}

It is easy to check that if $f$ is operator convex (resp. operator monotone), then $\tr\circ f$ is convex (resp. monotonically increasing).

However, in general, if the scalar function $f$ is a monotonic (or convex) function, then it is not necessarily an operator monotone (or operator convex) function.
Examples of this can be found in~\citep{bhatia,carlen}.
However, $\tr\circ f$ does inherit the monotonicity and/or convexity of $f$.

In the rest of the paper, whenever we state \emph{``$f$ is monotonic/convex''}, we mean that the scalar function $f$ is monotonic/convex.

\begin{proposition}[Monotonicity and Convexity Properties of Trace of a Function~\citep{carlen}]
	Let $f:\mathbb{R}\rightarrow\mathbb{R}$ be a function that is $\mathcal{C}^0$ on a convex set $S\subseteq\mathbb{R}$.
		If $f$ is convex (resp. concave) in $S$, then $\tr \circ f$ is convex (resp. concave) in $\mathcal{M}_{\text{herm}^n}(S)$.
\end{proposition}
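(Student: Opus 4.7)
The plan is to reduce the proposition to a single auxiliary fact commonly known as Peierls' inequality: for any Hermitian $M \in \mathcal{M}_{\text{herm}^n}(S)$, any orthonormal basis $\{u_j\}_{j=1}^n$ of $\mathbb{C}^n$, and any convex $f$ on $S$, one has
\[
\sum_{j=1}^n f\bigl(\langle u_j, M u_j\rangle\bigr) ~\leq~ \tr(f(M)),
\]
with the reverse inequality for concave $f$. I would prove this first: expand each $u_j$ in a unit eigenbasis $\{w_k\}$ of $M$ with eigenvalues $\{\lambda_k\}\subseteq S$, write $\langle u_j, M u_j\rangle = \sum_k |\langle w_k, u_j\rangle|^2 \lambda_k$, observe that this is a convex combination of eigenvalues and therefore lies in $S$ (so $f$ is defined at this point), apply scalar Jensen to each $j$, then interchange the order of summation and use the completeness relation $\sum_j |\langle w_k, u_j\rangle|^2 = 1$ to recover $\sum_k f(\lambda_k) = \tr(f(M))$.

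With Peierls' inequality in hand, the main statement follows quickly. Given $A, B \in \mathcal{M}_{\text{herm}^n}(S)$ and $\alpha \in [0,1]$, let $C = (1-\alpha)A + \alpha B$; by Lemma~\ref{lemma:convex-set}, $C \in \mathcal{M}_{\text{herm}^n}(S)$. I would choose $\{v_j\}$ to be a unit eigenbasis of $C$ with eigenvalues $\{c_j\}\subseteq S$, so that
\[
\tr(f(C)) ~=~ \sum_{j=1}^n f(c_j) ~=~ \sum_{j=1}^n f\bigl(\langle v_j, C v_j\rangle\bigr),
\]
and by linearity $\langle v_j, C v_j\rangle = (1-\alpha)\langle v_j, A v_j\rangle + \alpha \langle v_j, B v_j\rangle$. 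Applying scalar Jensen to each term (the two inner products again lie in $S$ by the same convex-combination argument used for Peierls'), and then Peierls' inequality to each of the two resulting sums, yields
\[
\tr(f(C)) ~\leq~ (1-\alpha)\, \tr(f(A)) + \alpha \,\tr(f(B)),
\]
which is the claimed convexity. The concave case follows by applying the convex case to $-f$.

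The main obstacle is establishing Peierls' inequality cleanly, in particular justifying the interchange of summations and the use of the completeness relation to collapse $\sum_j |\langle w_k, u_j\rangle|^2$ to $1$; everything else is essentially two applications of the scalar Jensen inequality. The key conceptual step is the choice of the eigenbasis of the convex combination $C$ as the test basis, which converts the trace of $f(C)$ into a sum of scalar evaluations $f(\langle v_j, C v_j\rangle)$ that can be processed entry-by-entry, and then Peierls' lets us re-inflate the resulting sums back to traces of $f(A)$ and $f(B)$. Note that no operator convexity of $f$ is required: the argument only uses scalar convexity twice, exploiting the fact that the trace symmetrizes over the basis and thereby hides any non-commutativity between $A$, $B$, and $C$.
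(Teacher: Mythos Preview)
Your proposal is correct and is essentially the same argument the paper uses. The paper does not give a separate proof of the single-variable proposition (it cites it from Carlen), but its proof of the multi-variable generalization (Proposition~\ref{prop:convexity-multivar}), specialized to $m=1$, is exactly your argument: take the unit eigenbasis $\{\mathbf{y}_j\}$ of $C=(1-\alpha)A+\alpha B$, write $\tr(f(C))=\sum_j f(\mathbf{y}_j^* C\,\mathbf{y}_j)$, apply scalar Jensen to split the convex combination, then expand $\mathbf{y}_j^* A\,\mathbf{y}_j=\sum_k |\mathbf{y}_j^*\mathbf{m}_k|^2\mu_k$ in the eigenbasis of $A$, apply Jensen again, and collapse using $\sum_j |\mathbf{y}_j^*\mathbf{m}_k|^2=1$. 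The only difference is organizational: you isolate the second Jensen-plus-completeness step as Peierls' inequality, whereas the paper inlines it.
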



\begin{corollary}
	Suppose $f:\mathbb{R}\rightarrow \mathbb{R}$ is a $\mathcal{C}^0$ function on a convex set $S\subseteq \mathbb{R}$.
	Let $M$ be a Hermitian matrix parametrized linearly by $t$ so that we can write $M(t) = M_0 + M_1 t$, where both $M_0$ and $M_1$ are Hermitian and $M_1$ is positive semi-definite.
	Define $\mathcal{T} = \{t ~|~ M(t) \in \mathcal{M}_{\text{herm}^n}(S) \} \subseteq \mathbb{R}$ (it's easy to check using Lemma~\ref{lemma:convex-set} that $\mathcal{T}$ is a convex set).
	Define $g = \tr\circ f \circ M: \mathcal{T} \rightarrow \mathbb{R}$ as $g(t) = \tr(f(M(t)))$.
		If $f$ is convex (resp. concave) in $S$, then $g$ is convex (resp. concave) in $\mathcal{T}$.
\end{corollary}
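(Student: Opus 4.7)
The plan is to reduce the corollary to the preceding proposition by a standard ``composition with an affine map'' argument. The key observation is that the map $t \mapsto M(t) = M_0 + M_1 t$ is affine from $\mathbb{R}$ into the Hermitian matrices, and the previous proposition gives convexity of $\tr \circ f$ on $\mathcal{M}_{\text{herm}^n}(S)$. Convexity is preserved under pre-composition with affine maps, so $g = (\tr \circ f) \circ M$ should inherit convexity on $\mathcal{T}$.

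First I would verify that $\mathcal{T}$ is a convex subset of $\mathbb{R}$, which is noted in the statement and follows directly from Lemma~\ref{lemma:convex-set}: if $t_1, t_2 \in \mathcal{T}$ and $\alpha \in [0,1]$, then $M(t_1), M(t_2) \in \mathcal{M}_{\text{herm}^n}(S)$, and by affineness
\[
M(\alpha t_1 + (1-\alpha) t_2) ~=~ M_0 + M_1 (\alpha t_1 + (1-\alpha) t_2) ~=~ \alpha M(t_1) + (1-\alpha) M(t_2),
\]
which lies in $\mathcal{M}_{\text{herm}^n}(S)$ by Lemma~\ref{lemma:convex-set}. This both establishes convexity of $\mathcal{T}$ and sets up the key identity for the next step.

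Next, assuming $f$ is convex in $S$, I would apply the preceding proposition to the pair of Hermitian matrices $M(t_1)$ and $M(t_2)$:
\[
(\tr \circ f)\bigl(\alpha M(t_1) + (1-\alpha) M(t_2)\bigr) ~\leq~ \alpha (\tr \circ f)(M(t_1)) + (1-\alpha) (\tr \circ f)(M(t_2)).
\]
Combining this with the affine identity above yields
\[
g(\alpha t_1 + (1-\alpha) t_2) ~\leq~ \alpha g(t_1) + (1-\alpha) g(t_2),
\]
which is exactly convexity of $g$ on $\mathcal{T}$. The concave case follows by applying the same argument to $-f$, or equivalently by reversing the inequality throughout.

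Since every step is essentially a direct appeal to the preceding proposition and Lemma~\ref{lemma:convex-set}, I do not anticipate a real obstacle; the only minor care needed is in checking that the affine parameterization keeps us inside $\mathcal{M}_{\text{herm}^n}(S)$ under convex combinations, which is exactly what Lemma~\ref{lemma:convex-set} guarantees. In short, the corollary is the restriction of the proposition to the one-parameter affine family $\{M_0 + M_1 t\}$.
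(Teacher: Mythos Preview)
Your proof is correct and is exactly the intended argument. The paper does not give an explicit proof of this single-variable corollary (it is left as an immediate consequence of the preceding proposition), but the identical affine-composition argument appears verbatim in the paper's proof of part~(iii) of Corollary~\ref{corr:parameterizations}, the multi-variable generalization: write $M(\alpha t_1 + (1-\alpha)t_2) = \alpha M(t_1) + (1-\alpha) M(t_2)$ by affineness, then apply the convexity of $\tr\circ f$ from the proposition. Your observation that the positive semi-definiteness of $M_1$ plays no role in the convexity statement is also correct---that hypothesis is relevant only for monotonicity.
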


\section{Multi-variable Functions}

\subsection{Introduction: Function of Two Variables}

We start with generalization of the definition of evaluation of a function at matrices to the case of function of two variables.

\begin{definition}[Extension of a Scalar Function of Two Variables to Diagonalizable Matrices]\label{def:function-double-var}
Consider a function $f:S_1 \times S_2 \rightarrow \mathbb{C}$
(where $S_j$ are subsets of $\mathbb{C}$, with the possibility that $S_j \subseteq \mathbb{R}$ and/or $f$ is a real-valued function).
Given two $n\times n$ diagonalizable matrices, $M = P \,\diag(\mu_1, \mu_2, \cdots, \mu_n) P^{-1} \in \mathcal{M}_{\text{diag}^n}(S_1)$ and $N = Q \,\diag(\nu_1, \nu_2, \cdots, \nu_n) Q^{-1} \in \mathcal{M}_{\text{diag}^n}(S_2)$, we define the evaluation of $f$ at $(M,N)$ as 
$f:\mathcal{M}_{\text{diag}^n}(S_1) \times \mathcal{M}_{\text{diag}^n}(S_2) \rightarrow \mathbb{C}^{n\times n} \otimes \mathbb{C}^{n\times n} \simeq \mathbb{C}^{n^2 \times n^2}$ given by
\begin{eqnarray*}
f(M,N) ~~=~~ \left(P\otimes Q \right) ~
     \diag_{k,l}\!\left(f(\mu_k,\nu_l)\right)
    \left(P^{-1} \otimes Q^{-1} \right) 
\end{eqnarray*}
where `$\otimes$' is the tensor/Kroneker product,
and,
\begin{eqnarray*}
	\diag_{k,l}\!\left(f(\mu_k,\nu_l)\right) & = & \diag\Big(
	f(\mu_1,\nu_1), f(\mu_1,\nu_2), \cdots, f(\mu_1,\nu_n), \\
	& & ~~\qquad f(\mu_2,\nu_1), \cdots, f(\mu_2,\nu_n), \\
	& & \qquad\qquad \cdots, \cdots, f(\mu_n,\nu_1), \cdots, f(\mu_n,\nu_n)
	\Big)
\end{eqnarray*}
If the $(i,j)$-th element of a matrix $M$ is denoted $[M]_i^j$, in index notation the above definition is
\[ [ f(N,M) ]_{i_1 i_2}^{j_1 j_2} ~=~ \sum_{k_1,k_2} [P]_{i_1}^{k_1} [Q]_{i_2}^{k_2} \, f(\mu_{k_1}, \nu_{k_2}) \, [P^{-1}]_{k_1}^{j_1} [Q^{-1}]_{k_2}^{j_2} \]
\end{definition}

\begin{example} ~
    \begin{itemize}
        \item[i.] For a monomial, $f(x,y) = x^p y^q$, its matrix evaluation is $f(M,N) = M^p \otimes N^q$. This is due to the fact that $\diag_{k,l}\left(\mu_k^p \nu_l^q\right) = \diag_{k}\left(\mu_k^p\right) \otimes  \diag_{l}\left(\nu_l^q\right)$, and 
        thus $(P\otimes Q) \left(\diag_{k}\left(\mu_k^p\right) \otimes  \diag_{l}\left(\nu_l^q\right)\right) (P^{-1} \otimes Q^{-1}) = \left( P \diag_{k}\left(\mu_k^p\right) P^{-1} \right) \otimes \left( Q \diag_{k}\left(\nu_k^p\right) Q^{-1} \right)$.
        \item[ii.] For a polynomial, $f(x,y) = \sum_{p,q} a_{pq} x^p y^q$, its matrix evaluation is $f(M,N) = \sum_{pq} a_{pq} M^p \otimes N^q$.
        \item[iii.] For rational function $f(x,y) = \frac{g(x,y)}{h(x,y)}$, the matrix extension is $f(M,N) = g(M,N) \left( h(M,N)\right)^{-1}$.
        \item[iv.] From Definition~\ref{def:function-double-var} we note that $\tr(f(M,N)) = \sum_{j=1}^n \sum_{k=1}^n f(\mu_j,\nu_k)$.
    \end{itemize}
\end{example}


\subsection{General Case: Multi-Variable Functions}

%

At this point we generalize the above definition to a function of $m$ variables.
\begin{definition}[Extension of a Scalar Function of $m$ Variables to Diagonalizable Matrices]\label{def:function-mult-var}
	Consider a function $f:S_1\times S_2\times\cdots \times S_m \rightarrow \mathbb{C}$.
	(where $S_j\subseteq \mathbb{C}, j=1,2,\cdots,m$, 
	with the possibility that $S_j \subseteq \mathbb{R}$ and/or $f$ is real-valued).
	Given $n\times n$ diagonalizable matrices, $M_j = P_j \,\diag(\lambda_{j1}, \lambda_{j2}, \cdots, \lambda_{jn}) P_j^{-1}, ~j=1,2,\cdots,m$, we define the evaluation of $f$ at $(M_1,M_2,\cdots,M_m)$ as $f:\mathcal{M}_{\text{diag}^n}(S_1) \times \mathcal{M}_{\text{diag}^n}(S_2) \times \cdots \times \mathcal{M}_{\text{diag}^n}(S_m) \rightarrow \mathbb{C}^{n\times n} \otimes \mathbb{C}^{n\times n} \otimes \cdots \otimes \mathbb{C}^{n\times n} \simeq \mathbb{C}^{n^m \times n^m}$ given by
	\begin{eqnarray} \label{eq:multi-var-def}
		f(M_1, M_2, \cdots, M_m) ~~=~~ \left( \bigotimes_{j=1}^m P_j \right) ~
		\left( \diag_{j_1,j_2,\cdots,j_m}\!\left(f(\lambda_{1 j_1},\lambda_{2 j_2},\cdots,\lambda_{m j_m})\right) \right)
		\left( \bigotimes_{j=1}^m P_j^{-1} \right)
	\end{eqnarray}
	Moving forward, a list of inputs to a function such as $(a_1, a_2, \cdots, a_m)$ will be written as $(a_l)_{l=1}^m$ for compactness.
  	In index notation equation~\eqref{eq:multi-var-def} is
	\[ [ f((M_l)_{l=1}^m) ]_{i_1 i_2 \cdots i_m}^{j_1 j_2 \cdots j_m} ~=~ \sum_{k_1,k_2,\cdots,k_m=1}^n 
	\left( \prod_{l=1}^m [P_l]_{i_l}^{k_l} \right)
	f\left((\lambda_{l k_l})_{l=1}^m\right) 
	\left( \prod_{l=1}^m [P_l^{-1}]_{k_l}^{j_l} \right) \]
\end{definition}
Here $\sum_{k_1,k_2,\cdots,k_m=1}^n$ is used as a compact notation for the $m$ nested summations, $\sum_{k_1=1}^n \sum_{k_2=1}^n \cdots \sum_{k_m=1}^n$.
Moving forward we will also use the compact notation $\bigtimes_{j=1}^m W_j = W_1 \times W_2 \times \cdots \times W_m$ to denote the Cartesian product of $m$ sets.

\subsection{Trace of $f\left((M_l)_{l=1}^m\right)$}

We first note that $\tr \circ f\left((M_l)_{l=1}^m\right) = \tr\left( f\left((M_l)_{l=1}^m\right) \right) =
  \sum_{j_1,\cdots,j_m=1}^n f\left((\lambda_{l j_l})_{l=1}^m\right)$.

\begin{proposition}[Derivative of Trace of a Differentiable function of $(M_l)_{l=1}^m$] \label{prop:derivative-multi-var}
Let $S_j\subseteq \mathbb{R}, j=1,2,\cdots,m$ be locally-compact subsets of the real line. 
Let $f:\bigtimes_{j=1}^m S_j \rightarrow \mathbb{R}$ be a real-valued function of class $\mathcal{C}^m$. 
Consider real eigen-valued diagonaliable matrices, $M_l \in \mathcal{M}_{\text{diag}^n}, ~l=1,2,\cdots,m$, parametrized by a real parameter $t$ such that the elements of $M_l(t)$ are $\mathcal{C}^1$ in $t$.
 Then,
\begin{equation} \label{eq:trace-fun}
    \frac{d}{dt} \tr(f(M_l(t))_{l=1}^n) 
    ~=~ \tr\left( \sum_{k=1}^m \partial_k f\left((M_l(t))_{l=1}^m\right) \left(I^{\otimes(k-1)} \otimes \frac{dM_k(t)}{dt} \otimes I^{\otimes(m-k)} \right)
            \right) 
\end{equation}
for all $t$ such that 
$M_j(t) \in \mathcal{M}_{\text{diag}^n} (S_j)$,
and $\left\|\frac{d M_l(t)}{dt}\right\|$ \& $\|(P_l(t))^{-1}\|$ are bounded for all $j\in\{1,2,\cdots,m\}$ (where $P_l(t)$ are the normalized modal matrices made up of unit eigenvectors of $M_l(t)$ as their columns).

Here $\partial_k f$ refers to the partial derivative of the function $f$ with respect to its $k$-th input, and $I^{\otimes r}$ refers to $r$ times tensor product of the identity matrix with itself.
\end{proposition}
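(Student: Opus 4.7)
My plan is to mirror the strategy used in the single-variable case: establish the formula first for monomials using the tensor-product structure of Definition~\ref{def:function-mult-var}, extend to polynomials in $m$ variables by linearity, and then approximate a general $\mathcal{C}^m$ function $f$ by polynomials on a compact subset of $\bigtimes_{l=1}^m S_l$.

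First I would handle the monomial $f(x_1,\ldots,x_m)=\prod_{l=1}^m x_l^{p_l}$. Since $\diag_{j_1,\ldots,j_m}\!\bigl(\prod_l \lambda_{l j_l}^{p_l}\bigr)=\bigotimes_l \diag_{j_l}(\lambda_{l j_l}^{p_l})$, Definition~\ref{def:function-mult-var} collapses to $f\bigl((M_l)_{l=1}^m\bigr)=\bigotimes_{l=1}^m M_l^{p_l}$, so multiplicativity of trace under Kronecker product gives $\tr\bigl(f((M_l(t))_{l=1}^m)\bigr)=\prod_{l=1}^m \tr\bigl(M_l(t)^{p_l}\bigr)$. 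Differentiating by the Leibniz rule and invoking Proposition~\ref{lemma:derivative} in each factor yields
\[
\frac{d}{dt}\tr\bigl(f((M_l(t))_{l=1}^m)\bigr) \;=\; \sum_{k=1}^m \Bigl(\prod_{l\neq k}\tr(M_l(t)^{p_l})\Bigr)\,\tr\Bigl(p_k M_k(t)^{p_k-1}\tfrac{dM_k(t)}{dt}\Bigr).
\]
On the right-hand side of the claimed identity, $\partial_k f$ is the monomial with $p_k$ lowered by one (times a factor of $p_k$), so $\partial_k f\bigl((M_l)_{l=1}^m\bigr)=p_k\,M_1^{p_1}\otimes\cdots\otimes M_k^{p_k-1}\otimes\cdots\otimes M_m^{p_m}$; multiplying by $I^{\otimes(k-1)}\otimes (dM_k/dt)\otimes I^{\otimes(m-k)}$, taking traces, and again using $\tr(A_1\otimes\cdots\otimes A_m)=\prod_l\tr(A_l)$, I recover exactly the expression above. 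Linearity of both sides then extends the identity to every polynomial in $m$ variables.

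For the general case, fix $t_0$ at which the hypotheses hold, and use the boundedness of $\|dM_l/dt\|$ and $\|(P_l(t))^{-1}\|$ together with continuity of eigenvalues to conclude that the trajectories $\lambda_{l j}(t)$ remain in a compact set $K_l\subset S_l$ for $t$ in a neighborhood of $t_0$; hence the $m$-tuples $(\lambda_{1 j_1},\ldots,\lambda_{m j_m})$ all lie in $K:=\bigtimes_l K_l$. Because $f\in\mathcal{C}^m$, I can pick polynomials $p_\varepsilon$ with $p_\varepsilon\to f$ and $\partial_k p_\varepsilon\to\partial_k f$ uniformly on $K$ for every $k$. Setting $g_\varepsilon(t):=\tr(p_\varepsilon((M_l(t))_{l=1}^m))$ and using the polynomial identity just proved, both $g_\varepsilon(t)$ and $g_\varepsilon'(t)$ converge uniformly in $t$ near $t_0$ to $\tr(f((M_l(t))_{l=1}^m))$ and to the right-hand side of the claimed formula respectively, because the trace on $\mathbb{C}^{n^m\times n^m}$ is a bounded linear functional and the bounded factors $I^{\otimes(k-1)}\otimes (dM_k/dt)\otimes I^{\otimes(m-k)}$ as well as the products of $\bigotimes_l P_l$ and $\bigotimes_l P_l^{-1}$ are uniformly controlled by hypothesis. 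A standard uniform-convergence-of-derivatives theorem then produces the desired identity at $t_0$.

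The step I expect to be the main obstacle is the simultaneous polynomial approximation of $f$ together with all of its first-order partials on $K$: Stone--Weierstrass alone only delivers $\mathcal{C}^0$ approximation. A clean remedy is to first smooth $f$ by convolution with a compactly supported mollifier (legitimate since $f\in\mathcal{C}^m$ on a neighborhood of $K$), approximate the smoothed function and its partials uniformly by Taylor polynomials, and then shrink the mollifier; the $\mathcal{C}^m$ regularity is used precisely here to guarantee that the mollified partials converge uniformly back to $\partial_k f$. I would explicitly avoid a direct chain-rule computation on the eigenvalue sum $\sum_{j_1,\ldots,j_m} f(\lambda_{1 j_1}(t),\ldots,\lambda_{m j_m}(t))$, since that route forces one to confront the non-smoothness of eigenvalue branches across crossings, which the polynomial-plus-approximation argument sidesteps entirely.
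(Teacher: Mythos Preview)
Your three-stage strategy (monomials $\to$ polynomials $\to$ $\mathcal{C}^m$ approximation) matches the paper's exactly, and your monomial and polynomial computations are essentially identical to the paper's. Two points of divergence are worth noting. First, the paper does \emph{not} avoid the eigenvalue chain rule you warn against: for the left-hand-side convergence it differentiates $\sum_{j_1,\ldots,j_m} f(\lambda_{1j_1}(t),\ldots,\lambda_{mj_m}(t))$ directly, invoking the Implicit Function Theorem for smoothness of the eigenvalues and Weyl's perturbation inequality to bound $|\lambda'_{lj}(t)|\le\|dM_l/dt\|_2$; your uniform-convergence-of-derivatives route is arguably cleaner precisely because it sidesteps the eigenvalue-crossing issue you flag. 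Second, your proposed remedy for simultaneous $\mathcal{C}^1$ polynomial approximation---mollify with a compactly supported kernel, then take Taylor polynomials---does not work as stated: convolution with a compactly supported mollifier produces only a $\mathcal{C}^\infty$ function, whose Taylor expansion about a point need not converge on $K$. The paper's fix is to extend $f$ to a $\mathcal{C}^m$ function vanishing outside a box $\prod_l(a_l,b_l)\supset K$, apply Stone--Weierstrass to the top mixed partial $\partial_1\partial_2\cdots\partial_m f$, and then integrate the resulting polynomial approximants $m$ times; by construction this yields polynomials $f_N$ with $f_N\to f$ and $\partial_k f_N\to\partial_k f$ uniformly on $K$ for every $k$, which is exactly what your argument needs.
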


\begin{proof}
%
    We will start by proving the statement for a monomial $f$, then we will prove it for a polynomial, and finally for general $\mathcal{C}^m$ functions.
    
    \vspace{0.5em}\noindent
    \emph{Monomial:} If $f((x)_{l=1}^m) = \prod_{l=1}^m {x_l}^{p_l}$, we have\\
    $
    \frac{d}{dt} \tr\left(f\left( (M_l(t))_{l=1}^m \right)\right) 
    ~~=~~ \frac{d}{dt} \tr\left( \bigotimes_{l=1}^m (M_l(t))^{p_l} \right) $
    
    $=~~ \frac{d}{dt} \prod_{l=1}^m \tr\left( (M_l(t))^{p_l} \right)
    $ \qquad\qquad{\small(since $\tr(A\otimes B) = \tr(A) \tr(B)$)}
    
%
    $= \sum_{k=1}^m \left( \prod_{l=1}^{k-1} \tr \left( (M_l(t))^{p_l} \right) \right) \tr\left(\frac{d}{dt} (M_k(t))^{p_k} \right) \left( \prod_{l=k+1}^{m} \tr \left( (M_l(t))^{p_l} \right) \right)$

    $= \sum_{k=1}^m p_k \left( \prod_{l=1}^{k-1} \tr \left( (M_l(t))^{p_l} \right) \right) \tr\left( (M_k(t))^{p_k-1} \frac{dM_k(t)}{dt} \right) \left( \prod_{l=k+1}^{m} \tr \left( (M_l(t))^{p_l} \right) \right)$
    
    	\qquad\qquad {\small (since $\tr\left(\frac{d}{dt} (M_k(t))^{p_k} \right) = \sum_{j=0}^{p_k-1}  \tr\left(  (M_k(t))^j \frac{d M_k(t)}{dt} (M_k(t))^{p_k-1-j} \right) 
    	= p_k\tr\left( M_k(t)^{p_k-1} \frac{dM_k(t)}{dt} \right) $)} 
    
    $= \sum_{k=1}^m p_k \tr \left( \bigotimes_{l=1}^{k-1} (M_l(t))^{p_l}  ~\otimes~ \left( (M_k(t))^{p_k-1} \frac{dM_k(t)}{dt} \right) ~\otimes~ \bigotimes_{l=k+1}^{m} \tr \left( (M_l(t))^{p_l} \right) \right)$
    
    $= \sum_{k=1}^m \tr \left( \left( p_k \bigotimes_{l=1}^{k-1} (M_l(t))^{p_l}  ~\otimes~  (M_k(t))^{p_k-1}  ~\otimes~ \bigotimes_{l=k+1}^{m} (M_l(t))^{p_l} \right) \left( I^{\otimes(k-1)} \otimes \frac{dM_k(t)}{dt} \otimes I^{\otimes(m-k)} \right) \right)$
    
    	\qquad\qquad\qquad {\small (since $A \otimes (BC) \otimes D = (A\otimes B\otimes D)(I \otimes C \otimes I)$.)}
    
    $= \tr\left( \sum_{k=1}^m \partial_k f\left((M_l(t))_{l=1}^m\right) \left(I^{\otimes(k-1)} \otimes \frac{dM_k(t)}{dt} \otimes I^{\otimes(m-k)} \right) \right)$
    
    
    
    \vspace{0.5em}\noindent
    \emph{Polynomial:} 
%
	Due to linearity of trace and derivative, the result naturally extends to polynomials.
    
    \vspace{0.5em}\noindent
    \emph{$\mathcal{C}^m$ function:}
    We first note that using the Implicit Function Theorem on the characteristic polynomial, $Q(\lambda; t)$, of $M_l(t)$, it can be shown that the eigenvalues of the matrix $M_l(t)$ (for a $t$ in an open subset, $S_l$, of the real line, in a neighborhood of which all solutions to the characteristic polynomial are real), vary smoothly with $t$~\citep{rahman2002analytic}. Hence the derivatives of the eigenvalues of $M_l(t)$, $\left(\lambda'_{lj}(t)\right)_{j=1,2,\cdots,n}$ are defined. 
    By Weyly's Perturbation Theorem~\citep{bhatia}, $\left| \lambda_{lj}(t+\Delta t) - \lambda_{lj}(t) \right| \leq \| M_l(t+\Delta t) - M_l(t) \|_2$ (where, $\lambda_{lj}(t)$ denotes the $j$-th eigenvalue of $M_l(t)$ in decreasing order of value, and $\|\cdot\|_2$ denotes the induced $2$-norm). With $\Delta t \rightarrow 0$, this gives $\left| \lambda'_{lj}(t) \right| \leq \left\|\frac{dM_l(t)}{dt}\right\|_2$.
    
    Given a function $f$ that is $\mathcal{C}^m$ in $\bigtimes_{l=1}^m S_l\subseteq \mathbb{R}^m$, one can construct a sequence of polynomials, $f_1, f_2, \cdots$ that converges uniformly to $f$ as well as $\partial_k f_1, \partial_k f_2, \cdots$ converges uniformly to $\partial_k f$ in $\bigtimes_{l=1}^m S_l$ for all $k=1,2,\cdots,m$ (If needed, construct an approximation, $\widetilde{f}$, that is $\mathcal{C}^m$ and agrees with $f$ in a neighborhood, $\bigtimes_{l=1}^m (a_l, b_l)$, of $\bigtimes_{l=1}^m S_j$, but vanishes outside it. Use Stone-Weierstrass approximation theorem to construct a sequence of polynomial approximations, $\{p_k\}_{k=1,2,\cdots}$, of the $m$-th mixed derivative, $\partial_1 \partial_2 \cdots \partial_m \widetilde{f}$, and then integrate it $m$ times, $f_k((x_l)_{l=1}^m) = \int_{a_1}^{x_1} \int_{a_2}^{x_2} \cdots \int_{a_m}^{x_m} p_k((s_l)_{l=1}^m) ds_m ds_{m-1} \cdots ds_1$, to get the necessary sequence of polynomial approximations, $\{f_k\}_{k=1,2,\cdots}$, of $\widetilde{f}$, and hence of $f$).
    If $\epsilon$ is any positive real number, there exists a $N_0$ such that 
    $\sup_{x_l\in S_l} |\partial_k f_N((x)_{l=1}^n) - \partial_k f((x)_{l=1}^n)| < \epsilon$ for all $N\geq N_0$, $k=1,2,\cdots,m$~\citep{abbott2001understanding}.
    
    Suppose $M_l(t) = P_l(t) \,\diag(\lambda_{l1}(t),\lambda_{l2}(t),\cdots, \lambda_{ln}(t)) (P_l(t))^{-1}$ is the diagonalization of the matrix.
    %
    %
    Thus, for a $t$ such that $\lambda_{lk}(t) \in S_l, ~\forall l,k$,
    \begin{eqnarray*}
        & & \left| \frac{d}{dt} \tr(f_N((M_l(t))_{l=1}^m)) -  \frac{d}{dt} \tr(f((M(t))_{l=1}^m)) \right| \\
        & = & \left| \frac{d}{dt} \sum_{j_1,j_2,\cdots=1}^n f_N\left( (\lambda_{l j_l}(t))_{l=1}^m \right) ~~-~~  \frac{d}{dt} \sum_{j_1,j_2,\cdots=1}^n f\left( (\lambda_{l j_l}(t))_{l=1}^m \right) \right| \\
        & = & \left| \sum_{j_1,j_2,\cdots=1}^n \sum_{k=1}^m \left( \partial_k f_N\left( (\lambda_{l j_l}(t))_{l=1}^m \right) - \partial_k f\left( (\lambda_{l j_l}(t))_{l=1}^m \right) \right) \lambda'_{k j_k}(t) \right| \\
        & \leq & \sum_{j_1,j_2,\cdots=1}^n \sum_{k=1}^m \left| \partial_k f_N\left( (\lambda_{l j_l}(t))_{l=1}^m \right) - \partial_k f\left( (\lambda_{l j_l}(t))_{l=1}^m \right) \left| ~\right| \lambda'_{k j_k}(t) \right| 
        ~~\leq~~ n^m \,m \,\epsilon \left\|\frac{dM(t)}{dt}\right\|_2
    \end{eqnarray*}
    This proves that the left-hand-side of equation~\eqref{eq:trace-fun} gets arbitrarily close to $\frac{d}{dt} \tr\left(f_N\left((M_l(t))_{l=1}^m \right) \right)$ as $N\rightarrow \infty$.
    
    
    Again,
    \begin{eqnarray*}
        & & \left| \tr\left( \sum_{k=1}^m \partial_k f_N\left((M_l(t))_{l=1}^m\right) \left(I^{\otimes(k-1)} \otimes \frac{dM_k(t)}{dt} \otimes I^{\otimes(m-k)} \right) \right) \right. \\ & & \qquad
        	\left. - \tr\left( \sum_{k=1}^m \partial_k f\left((M_l(t))_{l=1}^m\right) \left(I^{\otimes(k-1)} \otimes \frac{dM_k(t)}{dt} \otimes I^{\otimes(m-k)} \right) \right) \right| \\
        & = & \left| \sum_{k=1}^m \tr\left( \left(\bigotimes_{j=1}^m P_j(t)\right) ~\diag_{j_1,j_2,\cdots,j_m} \left( \partial_k f_N((\lambda_{l j_l}(t))_{l=1}^m) - \partial_k f((\lambda_{l j_l}(t))_{l=1}^m) \right) \left(\bigotimes_{j=1}^m (P_j(t))^{-1} \right) \right. \right. \\ & &
        	\qquad\qquad\qquad\left. \left. \left(I^{\otimes(k-1)} \otimes \frac{dM_k(t)}{dt} \otimes I^{\otimes(m-k)} \right) \right) \right| \\
        & = & \left| \sum_{k=1}^m \tr\Bigg( \diag_{j_1,j_2,\cdots,j_m} \left( \partial_k f_N((\lambda_{l j_l}(t))_{l=1}^m) - \partial_k f((\lambda_{l j_l}(t))_{l=1}^m) \right) \right. \\ & &
        \qquad\qquad\qquad\left. \left(\bigotimes_{j=1}^m (P_j(t))^{-1} \right) \left(I^{\otimes(k-1)} \otimes \frac{dM_k(t)}{dt} \otimes I^{\otimes(m-k)} \right) \left(\bigotimes_{j=1}^m P_j(t)\right) \Bigg) \right| \\
%
        & \leq & \mathlarger{\sum}_{k=1}^m \left\| \diag_{j_1,j_2,\cdots,j_m} \left( \partial_k f_N((\lambda_{l j_l}(t))_{l=1}^m) - \partial_k f((\lambda_{l j_l}(t))_{l=1}^m) \right) \right\|_F \\ & & 
        	\qquad\qquad \left\|  \left(\bigotimes_{j=1}^m (P_j(t))^{-1} \right) \left(I^{\otimes(k-1)} \otimes \frac{dM_k(t)}{dt} \otimes I^{\otimes(m-k)} \right) \left(\bigotimes_{j=1}^m P_j(t)\right) \right\|_F \\ & & \qquad\qquad\qquad\text{(Using Cauchy-Schwarz inequality $\tr(A^* B) \leq \sqrt{\tr(A^* A) \tr(B^* B)} = \|A\|_F \|B\|_F$.)} \\
        & = & \mathlarger{\sum}_{k=1}^m ~\sqrt{ \sum_{j_1,j_2,\cdots,j_m=1}^n \left| \partial_k f_N((\lambda_{l j_l}(t))_{l=1}^m) - \partial_k f((\lambda_{l j_l}(t))_{l=1}^m) \right|^2 } \\ & &
        	\qquad\qquad\qquad\qquad ~\left\|  I^{\otimes(k-1)} \otimes \left( (P_k(t))^{-1} \frac{dM_k(t)}{dt} P_k(t) \right) \otimes I^{\otimes(m-k)}  \right\|_F \\
        & \leq & \sum_{k=1}^m \sqrt{n^m \epsilon^2} ~ \left(n^{m-1} \left\|(P_k(t))^{-1} \frac{dM_k(t)}{dt} P_k(t)\right\|_F\right) \\ & &  \qquad\qquad\text{\small(since $\|A\otimes B\|_F^2 = \tr((A\otimes B)^* (A\otimes B) ) = \tr((A^* A)\otimes (B^* B)) = \|A\|_F^2 \|B\|_F^2$)}\\
        & \leq & \epsilon ~n^{\frac{3m}{2}-1} ~\sum_{k=1}^m \left\| (P_k(t))^{-1} \right\|_F \left\| \frac{dM_k(t)}{dt} \right\|_F \sqrt{n} \qquad\text{(since the columns of $P_k(t)$ are unit vectors.)}\\
        & = & \epsilon ~n^{\frac{3m-1}{2}} ~\sum_{k=1}^m \left\| (P_k(t))^{-1} \right\|_F \left\| \frac{dM_k(t)}{dt} \right\|_F 
    \end{eqnarray*}
    This proves that the right-hand-side of equation~\eqref{eq:trace-fun} gets arbitrarily close to $\frac{d}{dt} \tr(f_N(M(t)))$ as $N\rightarrow \infty$.
    
    Hence in the limit as $\epsilon \rightarrow 0$, this proves the result for a $\mathcal{C}^1$ function, $f$.
\end{proof}

\begin{corollary}[Monotonic function] \label{cor:monotonic-multivar}
	Let $f:\bigtimes_{l=1}^m S_l \rightarrow\mathbb{R}$ (where $S_l$ are locally-compact subsets of the real line) be a function that is $\mathcal{C}^m$ and monotonically increasing (resp. decreasing) in $\bigtimes_{l=1}^m S_l$ with respect to each of its inputs.
	Consider real-eigenvalued diagonaliable matrices, $M_l\in \mathcal{M}_{\text{diag}^n}, ~l=1,2,\cdots,m$, parametrized by a real parameter $t$ such that the elements of $M_l(t)$ are $\mathcal{C}^1$ in $t$.
	Let 
	
	\noindent
	\[ \mathcal{T} = \left\{ t ~\Big|~ M_l(t)\in\mathcal{M}_{\text{diag}^n}(S_l), ~\frac{d M_l(t)}{dt} \cgeq 0, \text{and $\left\|\frac{d M_l(t)}{dt}\right\|, \|(P_l(t))^{-1}\|$ are bounded}, ~\forall l=1,2,\cdots,m \right\} \]
	
	\noindent
	Then, $g(t) = \tr\left( f\left( (M(t))_{l=1}^m \right) \right)$ is monotonically increasing (resp. decreasing) in $\mathcal{T}$.
\end{corollary}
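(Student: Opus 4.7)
The plan is to differentiate $g$ using Proposition~\ref{prop:derivative-multi-var} and then verify that each term in the resulting expression is non-negative. Concretely, Proposition~\ref{prop:derivative-multi-var} yields, on $\mathcal{T}$,
\[ g'(t) ~=~ \sum_{k=1}^m \tr\!\left( \partial_k f\!\left((M_l(t))_{l=1}^m\right) \left( I^{\otimes(k-1)} \otimes \tfrac{dM_k(t)}{dt} \otimes I^{\otimes(m-k)} \right) \right). \]
I would next reduce each summand to a scalar sum by expanding in the joint eigenbasis $\bigotimes_l P_l(t)$. Using Definition~\ref{def:function-mult-var} to expand $\partial_k f\!\left((M_l(t))_{l=1}^m\right)$, together with the cyclic property of trace and the fact that the identity factors in the tensor cancel against $P_l^{-1} P_l$ for $l\neq k$, the $k$-th summand collapses to
\[ \sum_{j_1, \ldots, j_m = 1}^n \partial_k f\!\left((\lambda_{l j_l}(t))_{l=1}^m\right)\, \lambda'_{k j_k}(t), \]
where the identification $\lambda'_{k j_k}(t) = \left[ P_k(t)^{-1} \tfrac{dM_k(t)}{dt} P_k(t) \right]_{j_k}^{j_k}$ follows from differentiating the eigen-identity $P_k^{-1} M_k P_k = \Lambda_k$ and noting that the commutator contribution $[\Lambda_k, P_k^{-1} (dP_k/dt)]$ vanishes on the diagonal. (Equivalently, one may differentiate $\tr \circ f \circ (M_l)_l = \sum_{(j_l)} f((\lambda_{l j_l})_l)$ directly, using the smoothness of eigenvalues in $t$ already established inside the proof of Proposition~\ref{prop:derivative-multi-var}.)

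The remaining step is a pair of sign checks. Monotonic increase of $f$ in each of its $m$ inputs gives $\partial_k f \geq 0$ pointwise on $\bigtimes_l S_l$, so every factor $\partial_k f\!\left((\lambda_{l j_l}(t))_{l=1}^m\right)$ is non-negative on $\mathcal{T}$. The hypothesis $\tfrac{dM_k(t)}{dt} \cgeq 0$ together with the single-variable argument already used for Corollary~\ref{cor:monotonic} gives $\lambda'_{k j_k}(t) \geq 0$. Each summand is therefore a product of non-negative quantities, so $g'(t) \geq 0$ throughout $\mathcal{T}$, and the monotonically decreasing case follows verbatim by applying the same argument to $-f$.

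I expect the main obstacle to be the second sign claim, $\lambda'_{k j_k}(t) \geq 0$, outside the Hermitian setting. For Hermitian $M_k(t)$ this is immediate from Weyl's monotonicity theorem applied to the sorted eigenvalues, since $\tfrac{dM_k}{dt} \cgeq 0$ forces each sorted eigenvalue to be non-decreasing. For general diagonalizable, real-eigenvalued $M_k(t)$ the diagonal entry $\left[ P_k^{-1}\, \tfrac{dM_k}{dt}\, P_k \right]_{j_k}^{j_k}$ is not manifestly non-negative term-by-term, and this is exactly the subtlety the boundedness conditions $\|(P_l(t))^{-1}\|, \|\tfrac{dM_l}{dt}\|$ in the definition of $\mathcal{T}$ are designed to handle; I would carry out that control in the same manner as in the proof of the single-variable Corollary~\ref{cor:monotonic}.
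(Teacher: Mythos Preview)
Your route diverges from the paper's at the sign step. After invoking Proposition~\ref{prop:derivative-multi-var}, the paper does \emph{not} pass to the joint eigenbasis. Instead it takes a positive semi-definite square root $\frac{dM_k}{dt}=S_k(t)^2$ and uses cyclicity of the trace to rewrite the $k$-th summand as $\tr(A_k B_k A_k)$, where $A_k = I^{\otimes(k-1)}\otimes S_k(t)\otimes I^{\otimes(m-k)}$ and $B_k=\partial_k f\bigl((M_l(t))_{l=1}^m\bigr)$. Since $A_k\cgeq 0$ (tensor of a PSD matrix with identities) and $B_k$ has the non-negative spectrum $\{\partial_k f((\lambda_{l j_l})_l)\}$, the paper concludes $A_k B_k A_k\cgeq 0$ and hence non-negative trace. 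No claim about the sign of individual eigenvalue derivatives $\lambda'_{k j_k}$ is ever made.

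Your argument needs exactly that claim, and it is the gap. For non-normal $M_k$ the first-order perturbation formula reads $\lambda'_{kj}=(\mathbf{u}_j^{*}\tfrac{dM_k}{dt}\mathbf{v}_j)/(\mathbf{u}_j^{*}\mathbf{v}_j)$ with \emph{distinct} left and right eigenvectors, so $\tfrac{dM_k}{dt}\cgeq 0$ does not force $\lambda'_{kj}\geq 0$. Concretely, $M(t)=\left(\begin{smallmatrix}1&10\\0&2\end{smallmatrix}\right)+t\left(\begin{smallmatrix}1&1\\1&1\end{smallmatrix}\right)$ is diagonalizable with real eigenvalues for small $t$ and has $\tfrac{dM}{dt}\cgeq 0$, yet the eigenvalue equal to $1$ at $t=0$ has derivative $-9$. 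Your fallback---that the boundedness hypotheses on $\|P_l^{-1}\|$ and $\|\tfrac{dM_l}{dt}\|$ in the definition of $\mathcal{T}$ are ``designed to handle'' this sign---is mistaken: those bounds enter only the polynomial-approximation estimate inside the proof of Proposition~\ref{prop:derivative-multi-var}, and Corollary~\ref{cor:monotonic} is cited from the literature without proof, so there is nothing there to mimic. The square-root factorization is precisely the device you are missing. (One may observe that the paper's assertion $B_k\cgeq 0$ also leans on normality of the $M_l$, so both arguments are fully rigorous in the Hermitian setting; but the paper's matrix-level computation at least does not rest on the demonstrably false per-eigenvalue monotonicity that your reduction requires.)
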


\begin{proof}
	We prove the result for the case when $f$ is monotonically increasing with respect to each of its inputs.
	Since $\frac{d M_l(t)}{dt}$ is positive semi-definite, we can write it as a square, $\frac{d M_l(t)}{dt} = (S_l(t))^2$, where $S_l(t)$ is also a positive semi-definite matrix.
	thus, using Proposition~\ref{prop:derivative-multi-var},
	\begin{eqnarray*}
	& & \frac{d}{dt} \tr(f(M_l(t))_{l=1}^n) \\
	& = & \tr\left( \sum_{k=1}^m \partial_k f\left((M_l(t))_{l=1}^m\right) \left(I^{\otimes(k-1)} \otimes \frac{dM_k(t)}{dt} \otimes I^{\otimes(m-k)} \right) \right) \\
	& = & \tr\left( \sum_{k=1}^m \partial_k f\left((M_l(t))_{l=1}^m\right) \left(I^{\otimes(k-1)} \otimes S_k(t) \otimes I^{\otimes(m-k)} \right) \left(I^{\otimes(k-1)} \otimes S_k(t) \otimes I^{\otimes(m-k)} \right) \right) \\
	& = & \sum_{k=1}^m \tr\left( \left(I^{\otimes(k-1)} \otimes S_k(t) \otimes I^{\otimes(m-k)} \right) \left( \partial_k f\left((M_l(t))_{l=1}^m\right) \right) \left(I^{\otimes(k-1)} \otimes S_k(t) \otimes I^{\otimes(m-k)} \right)  \right) \\
	& = & \sum_{k=1}^m \tr\left( A_k B_k A_k \right)
	\end{eqnarray*}
where $A_k = I^{\otimes(k-1)} \otimes S_k(t) \otimes I^{\otimes(m-k)}$ is positive semi-definite (since $S_k(t)$ is positive semi-definite), and
$B_k = \partial_k f\left((M_l(t))_{l=1}^m\right)$ is also positive semi-definite (since its eigenvalues are $\partial_k f((\lambda_{l j_l})_{l=1}^m), j_l\in\{1,2,\cdots,n\}$, which are non-negative since $f$ is monotonically increasing with respect to each input, and hence $\partial_k f$ is non-negative in $\bigtimes_{k=1}^m S_k$).
Thus $A_k B_k A_k$ is positive semi-definite, hence $\frac{d}{dt} \tr(f(M_l(t))_{l=1}^n)$ is non-negative in $\mathcal{T}$. This proves the result for monotonically increasing case.

A similar proof can be given for the monotonically decreasing case.
\end{proof}

\begin{example}
Consider the function $f(x,y) = x^3 y^5$, which is monotonically increasing in $\mathbb{R}^2$ with respect to each input. Define Hermitian matrices $M_1(t) = C_1 + D_1 t$ and $M_2(t) = C_2 + D_2 t$, where $C_1, D_1, C_2 D_2$ are all Hermitian, and $D_1, D_2$ are positive semi-definite so that $\frac{M_j(t)}{dt} = D_j \cgeq 0$.
Then $g(t) = \tr \left( f(M_1(t), M_2(t)) \right) = \tr\left( (C_1 + D_1 t)^3 \otimes (C_2 + D_2 t)^5 \right)$ is monotonically increasing.
\end{example}	

\subsection{Hermitian Matrices}

We now focus on Hermitian matrices since any linear combination of Hermitian matrices is a Hermitian matrix, and the eigenvalues of convex combination of a collection of Hermitian matrices lie in the convex hull of the eigenvalues of the collection.

\begin{proposition}[Monotonicity Properties of Trace of $f\left((M_l)_{l=1}^m\right)$] \label{prop:monotonic-multivar-hermitian}
	Let $S_j \subset \mathbb{R}, ~j=1,2,\cdots,m$ be convex subsets of the real line.
	Let $f:\bigtimes_{j=1}^m S_j\rightarrow\mathbb{R}$ be a function that is of class $\mathcal{C}^m$. 
	
	If $f$ is monotonically increasing (resp. decreasing) with respect to its inputs in $\bigtimes_{j=1}^m S_j$, then $\tr \circ f$ is monotonically increasing (resp. decreasing) with respect to its inputs in $\bigtimes_{j=1}^m \mathcal{M}_{\text{herm}^n}(S_j)$.
	That is, for $(M_l)_{l=1}^m,$ $(N_l)_{l=1}^m \in \bigtimes_{j=1}^m \mathcal{M}_{\text{herm}^n}(S_j)$ such that $M_l \cgeq N_l, ~\forall\, l=1,2,\cdots,m$, we have $\tr\left(f\left( (M_l)_{l=1}^m \right)\right) \geq \tr\left(f\left( (N_l)_{l=1}^m \right)\right)$ when $f$ is monotonically increasing.
\end{proposition}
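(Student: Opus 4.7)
The plan is to reduce the statement to Corollary~\ref{cor:monotonic-multivar} by interpolating linearly between the two tuples of Hermitian matrices and applying the single-parameter result along the interpolation. Concretely, I would set $\widetilde{M}_l(t) = (1-t)N_l + t M_l$ for $t\in[0,1]$ and each $l=1,2,\ldots,m$, so that $\widetilde{M}_l(0) = N_l$ and $\widetilde{M}_l(1) = M_l$, and then study $g(t) = \tr\!\left(f\!\left((\widetilde{M}_l(t))_{l=1}^m\right)\right)$ on $[0,1]$. Establishing $g(1)\geq g(0)$ (respectively $g(1)\leq g(0)$) immediately delivers the claim.

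Next I would verify, one at a time, the hypotheses of Corollary~\ref{cor:monotonic-multivar} on the whole interval $[0,1]$. First, each $\widetilde{M}_l(t)$ is a real convex combination of Hermitian matrices, hence Hermitian, hence diagonalizable with real eigenvalues and $\mathcal{C}^1$ in $t$. Second, by Lemma~\ref{lemma:convex-set} (the eigenvalues of a convex combination of Hermitian matrices lie in the convex hull of the individual eigenvalues) together with convexity of $S_l$, one has $\widetilde{M}_l(t) \in \mathcal{M}_{\text{herm}^n}(S_l)$ for every $t\in[0,1]$. Third, the derivative $\tfrac{d\widetilde{M}_l(t)}{dt} = M_l - N_l$ is a fixed matrix whose norm is trivially bounded in $t$, and the hypothesis $M_l \cgeq N_l$ gives $\tfrac{d\widetilde{M}_l(t)}{dt}\cgeq 0$. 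Fourth, since $\widetilde{M}_l(t)$ is Hermitian one may choose its modal matrix $P_l(t)$ to be unitary, so $\|(P_l(t))^{-1}\| = 1$ uniformly in $t$.

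With all four conditions in hand, Corollary~\ref{cor:monotonic-multivar} yields that $g$ is monotonically increasing on $[0,1]$ when $f$ is monotonically increasing in each input; hence $\tr\!\left(f\!\left((M_l)_{l=1}^m\right)\right) = g(1) \geq g(0) = \tr\!\left(f\!\left((N_l)_{l=1}^m\right)\right)$, which is exactly the desired inequality. The monotonically decreasing case is identical, with the inequality reversed. I do not anticipate a significant obstacle here, since the heavy analytic machinery has already been deployed in Proposition~\ref{prop:derivative-multi-var} and Corollary~\ref{cor:monotonic-multivar}; the only point genuinely requiring care is the eigenvalue-localization of $\widetilde{M}_l(t)$ inside $S_l$ along the path, and that is precisely what Lemma~\ref{lemma:convex-set} and convexity of $S_l$ provide.
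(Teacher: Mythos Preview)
Your proposal is correct and follows essentially the same approach as the paper: linearly interpolate via $H_l(t)=N_l+(M_l-N_l)t$ and invoke Corollary~\ref{cor:monotonic-multivar} to conclude $g(1)\geq g(0)$. In fact, your write-up is more thorough than the paper's, which simply asserts that the hypotheses of the corollary hold, whereas you explicitly verify each one (Hermitianity, eigenvalue localization via Lemma~\ref{lemma:convex-set}, boundedness of the derivative, and unitarity of the modal matrices).
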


\begin{proof}
	Define the parameterization $H_l(t) = N_l + (M_l - N_l) t$ with $t\in[0,1]$ so that $\frac{dH_l(t)}{dt} = M_l - N_l \cgeq 0$.
	This satisfies all the conditions of Corollary~\ref{cor:monotonic-multivar}.
	Thus, $g(t) = \tr\left(f\left( (H_l(t))_{l=1}^m \right)\right)$ is a monotonic function.
	Thus $g(1) \geq g(0)$, that is, $\tr\left(f\left( (M_l)_{l=1}^m \right)\right) \geq \tr\left(f\left( (N_l)_{l=1}^m \right)\right)$.
\end{proof}

\begin{example}
	Consider the function $f(x,y) = x^3 y^5$, which is monotonically increasing in $\mathbb{R}^2$ with respect to each input. If $M_1, N_1, M_2, N_2$ are Hermitian matrices such that $M_1 \cgeq N_1, M_2 \cgeq N_2$, then
	$\tr \left( M_1^3 \otimes M_2^5 \right) \geq \tr \left( N_1^3 \otimes N_2^5 \right)$.
\end{example}

\begin{proposition}[Convexity Properties of Trace of $f\left((M_l)_{l=1}^m\right)$] \label{prop:convexity-multivar}
	Let $S_j \subset \mathbb{R}, ~j=1,2,\cdots,m$ be convex subsets of the real line.
	Let $f:\bigtimes_{j=1}^m S_j\rightarrow\mathbb{R}$ be a function that is of class $\mathcal{C}^0$. 
%

		If $f$ is convex (resp. concave) in $\bigtimes_{j=1}^m S_j$, then $\tr \circ f$ is convex (resp. concave) in $\bigtimes_{j=1}^m \mathcal{M}_{\text{herm}^n}(S_j)$.
		That is, $\tr \left( f \left( \alpha (M_l)_{l=1}^m + (1-\alpha) (N_l)_{l=1}^m \right) \right) \leq \alpha \tr \left( f \left( (M_l)_{l=1}^m \right) \right) + (1-\alpha) \tr \left( f \left( (N_l)_{l=1}^m \right) \right)$ for any $\alpha\in[0,1]$ and $(M_l)_{l=1}^m,$ $(N_l)_{l=1}^m \in \bigtimes_{j=1}^m \mathcal{M}_{\text{herm}^n}(S_j)$ when $f$ is convex (where $\alpha (M_l)_{l=1}^m = (\alpha M_l)_{l=1}^m$).
\end{proposition}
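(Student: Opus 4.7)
The plan is to generalize the classical single-variable convexity argument for $\tr\circ f$ (proved for $m=1$ earlier in the paper) by combining the $m$ naturally-arising orthostochastic matrices into a single product probability measure, and then invoking Jensen's inequality for the multivariate convex function $f$. I work in the convex case; the concave case is symmetric.

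Fix $\alpha\in[0,1]$ and Hermitian matrices $(M_l)_{l=1}^m, (N_l)_{l=1}^m$ with spectra in the respective $S_j$, and set $H_l = \alpha M_l + (1-\alpha) N_l$; Lemma~\ref{lemma:convex-set} guarantees each $H_l$ has spectrum in $S_l$. I will diagonalize $H_l, M_l, N_l$ with orthonormal eigenvectors $\{u_{l j}\}$, $\{p_{l k}\}$, $\{q_{l k}\}$ and eigenvalues $\{\mu_{l j}\}$, $\{\lambda^M_{l k}\}$, $\{\lambda^N_{l k}\}$. The pivotal classical identity is that each $\mu_{l j} = u_{l j}^* H_l u_{l j}$ splits as $\alpha\, a_{l j} + (1-\alpha)\, b_{l j}$, where the Rayleigh quotients $a_{l j} := u_{l j}^* M_l u_{l j} = \sum_{k} w^M_{l j k}\, \lambda^M_{l k}$ and analogously $b_{l j}$ are themselves convex combinations of eigenvalues of $M_l$ and $N_l$, with $w^M_{l j k} := |u_{l j}^* p_{l k}|^2$ forming a doubly stochastic (orthostochastic) matrix, and similarly for $w^N_{l j k}$. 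Convexity of $S_l$ then forces $a_{l j}, b_{l j}\in S_l$, so $f$ can be evaluated at these values.

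The argument then proceeds in two layers of convexity. First, pointwise convexity of $f$ applied at each tuple $(j_1,\ldots,j_m)$ gives $f((\mu_{l j_l})_{l=1}^m) \leq \alpha f((a_{l j_l})_{l=1}^m) + (1-\alpha) f((b_{l j_l})_{l=1}^m)$, so summing over all such tuples reduces the claim to proving
\[ \sum_{j_1,\ldots,j_m=1}^n f\bigl((a_{l j_l})_{l=1}^m\bigr) \;\leq\; \tr\bigl(f\bigl((M_l)_{l=1}^m\bigr)\bigr) \]
together with its $b/N$ analogue. Second, for each fixed $(j_1,\ldots,j_m)$ I introduce the product probability measure with masses $\prod_l w^M_{l j_l k_l}$ on tuples $(k_1,\ldots,k_m)$; its $l$-th marginal expectation equals $a_{l j_l}$ because the rows of $W^M_l=(w^M_{l j k})$ sum to $1$, so Jensen's inequality for multivariate convex $f$ upper-bounds $f((a_{l j_l})_{l=1}^m)$ by the weighted average $\sum_{k_1,\ldots,k_m} (\prod_l w^M_{l j_l k_l})\, f((\lambda^M_{l k_l})_{l=1}^m)$. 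Summing over $(j_1,\ldots,j_m)$ and interchanging sums, the column sums $\sum_{j_l} w^M_{l j_l k_l} = 1$ of double stochasticity collapse the weights and leave exactly $\tr(f((M_l)_{l=1}^m))$.

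The main obstacle is identifying the correct joint distribution for the Jensen step. A single orthostochastic matrix suffices in the one-variable case, but here I need a joint law on multi-indices $(k_1,\ldots,k_m)$ whose marginals simultaneously reproduce each Rayleigh quotient $a_{l j_l}$, and whose outer marginalization over $(j_1,\ldots,j_m)$ exactly cancels to recover $\tr(f((M_l)_{l=1}^m))$ on the right. The product measure $\prod_l w^M_{l j_l k_l}$ is essentially forced by these two requirements; once it is written down, the remaining manipulations are routine interchanges of summation. Note that no differentiability of $f$ is needed---only the $\mathcal{C}^0$ continuity together with convexity---since everything reduces to scalar Jensen-type inequalities.
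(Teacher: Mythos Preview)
Your proposal is correct and follows essentially the same approach as the paper's proof: both split each eigenvalue of $H_l=\alpha M_l+(1-\alpha)N_l$ via the Rayleigh quotient, apply scalar convexity of $f$ first in the $\alpha$-combination and then via the multivariate Jensen step with the product orthostochastic weights $\prod_l |u_{lj_l}^* p_{lk_l}|^2$, and finally collapse the double sum using the column-sum property of the orthostochastic matrices. Your write-up is in fact slightly more explicit than the paper's in naming the product probability measure and invoking Jensen's inequality, but the underlying argument is identical.
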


\begin{proof}~
	Let $(\mu_{lk})_{k=1}^n$ and $(\mathbf{m}_{lk})_{k=1}^n$ be eigenvalues and corresponding unit eigenvectors of $M_l$,
	and $(\nu_{lk})_{k=1}^n$ and $(\mathbf{n}_{lk})_{k=1}^n$ be eigenvalues and corresponding unit eigenvectors of $N_l$,
	for $l=1,2,\cdots,m$.
	
%
	Let $\alpha\in[0,1]$.
	Define $Y_l = \alpha M_l + (1-\alpha) N_l, ~l=1,2,\cdots,m$.
	Let $(\lambda_{lk})_{k=1}^n$ and $(\mathbf{y}_{lk})_{k=1}^n$ be eigenvalues and corresponding unit eigenvectors of $Y_l$ for $l=1,2,\cdots,m$. We denote the $p$-th element of $\mathbf{y}_{lk}$ by $[\mathbf{y}_{lk}]_p$.
	
	Since $Y_l$ is Hermitian we can write its $k$-th eigenvalue, $\lambda_{lk} = \mathbf{y}_{lk}^{*} Y_l \mathbf{y}_{lk}$. Thus,
	\begin{eqnarray*}
	& & \tr\left( f\left((Y_l)_{l=1}^m\right) \right) \\
	& = & \displaystyle \sum_{j_1,j_2,\cdots,j_m=1}^n f\left( (\lambda_{l j_l})_{l=1}^m \right) \\
	& = & \displaystyle \sum_{j_1,\cdots,j_m=1}^n f\left( (\mathbf{y}_{l j_l}^{*} Y_l \mathbf{y}_{l j_l})_{l=1}^m \right) \\
	& \leq &  \displaystyle 
				\alpha \sum_{j_1,\cdots,j_m=1}^n f\left( (\mathbf{y}_{l j_l}^{*} M_l \mathbf{y}_{l j_l})_{l=1}^m \right) ~~+~~
				(1-\alpha) \!\!\!\!\sum_{j_1,\cdots,j_m=1}^n f\left( (\mathbf{y}_{l j_l}^{*} N_l \mathbf{y}_{l j_l})_{l=1}^m \right) \\
				& & \qquad\qquad\qquad\qquad\qquad\qquad\qquad\qquad\qquad\qquad\qquad\qquad \text{(Since $f$ is convex.)} \\
	& = & \displaystyle 
				\alpha \sum_{j_1,\cdots,j_m=1}^n f\left( \left( \sum_{k=1}^n |\mathbf{y}_{l j_l}^{*} \mathbf{m}_{l k}|^2 \mu_{lk} \right)_{l=1}^m \right) ~~+~~
				(1-\alpha) \!\!\!\!\sum_{j_1,\cdots,j_m=1}^n f\left( \left( \sum_{k=1}^n |\mathbf{y}_{l j_l}^{*} \mathbf{n}_{l k}|^2 \nu_{lk} \right)_{l=1}^m \right) \\
	& \leq &  \displaystyle 
				\alpha \sum_{j_1,\cdots,j_m=1}^n ~\sum_{k_1,\cdots,k_m=1}^n |\mathbf{y}_{l j_l}^{*} \mathbf{m}_{l k_l}|^2 ~f\left( \left( \mu_{lk_l} \right)_{l=1}^m \right) \\ & & \qquad\qquad\qquad\qquad +~~~
				(1-\alpha) \sum_{j_1,\cdots,j_m=1}^n ~\sum_{k_1,\cdots,k_m=1}^n |\mathbf{y}_{l j_l}^{*} \mathbf{n}_{l k_l}|^2 ~f\left( \left( \nu_{lk_l} \right)_{l=1}^m \right) \\
				& & \qquad\qquad\qquad\qquad\text{(Since $f$ is convex in $\bigtimes_{l=1}^m S_l$, and $\sum_{k=1}^n |\mathbf{y}_{l j_l}^{*} \mathbf{m}_{l k}|^2=1$)} \\
	& = &  \displaystyle 
				\alpha \sum_{k_1,\cdots,k_m=1}^n ~\sum_{j_1,\cdots,j_m=1}^n |\mathbf{y}_{l j_l}^{*} \mathbf{m}_{l k_l}|^2 ~f\left( \left( \mu_{lk_l} \right)_{l=1}^m \right) \\ & & \qquad\qquad\qquad\qquad +~~~
				(1-\alpha) \sum_{k_1,\cdots,k_m=1}^n ~\sum_{j_1,\cdots,j_m=1}^n |\mathbf{y}_{l j_l}^{*} \mathbf{n}_{l k_l}|^2 ~f\left( \left( \nu_{lk_l} \right)_{l=1}^m \right) \\
	& = &  \displaystyle 
				\alpha \sum_{k_1,\cdots,k_m=1}^n ~f\left( \left( \mu_{lk_l} \right)_{l=1}^m \right) 
				~~+~~
				(1-\alpha) \!\!\!\!\sum_{k_1,\cdots,k_m=1}^n ~f\left( \left( \nu_{lk_l} \right)_{l=1}^m \right) \\
				& & \qquad\qquad\qquad\qquad\qquad\qquad\qquad\qquad\qquad\qquad\qquad\text{(Since $\sum_{j=1}^n |\mathbf{y}_{l j}^{*} \mathbf{m}_{l k_l}|^2 = 1$)} \\
	& = &  \displaystyle 
	\alpha\, \tr\left( f\left((M_l)_{l=1}^m\right) \right) ~~+~~ (1-\alpha)\, \tr\left( f\left((N_l)_{l=1}^m\right) \right)
	\end{eqnarray*}
	A similar proof can be given for the concave case.
\end{proof}

\begin{example}
	Consider $f(x,y) = \frac{1}{x^2 y} + \frac{1}{x y^2}$, which is convex in $\mathbb{R}_{\geq 0} \times \mathbb{R}_{\geq 0}$.
	Then the function $\tr\circ f(M,N) = \tr\left( \left( M^2\otimes I \right)^{-1} + \left( I\otimes N^2 \right)^{-1} \right) = \sum_{j=1}^n \sum_{k=1}^n \frac{1}{\mu_j^2 \nu_k} + \frac{1}{\mu_j \nu_k^2}$ (where $\mu_j$ and $\nu_k$ are eigenvalues of $M$ and $N$) is convex in the space of positive semi-definite Hermitian matrices. That is, $f(\alpha M_1 + (1-\alpha)M_2, \alpha N_1 + (1-\alpha)N_2) \leq \alpha f( M_1,N_1) + (1-\alpha) f( M_2,N_2)$, where $M_1, N_1, M_2, N_2$ are positive semi-definite Hermitian
\end{example}

\begin{definition}[Monotonic and Convex Matrix Parameterization]
	Suppose $M:\mathbb{R}\rightarrow \mathcal{M}_{\text{typ}^n}$ describes a matrix of type `$\text{typ}$' parametrized by a real value $t$.
	\begin{itemize}
		\item $M$ is called a monotonically increasing (resp. decreasing) parameterization in $S \subseteq \mathbb{R}$ if $t_2 \geq t_1 ~\Rightarrow~ M(t_2) \cgeq M(t_1)$ (resp. $M(t_2) \cleq M(t_1)$) for all $t_1,t_2\in S$.
		\item $M$ is called a convex (resp. concave) parameterization in $S \subseteq \mathbb{R}$ if $M(\alpha t_1 + (1-\alpha) t_2) \cleq \alpha M(t_1) + (1-\alpha) M(t_2)$ (resp. $\cgeq$) for all $\alpha\in [0,1]$ and $t_1,t_2\in S$.
	\end{itemize}
\end{definition}

\begin{example}
	The matrix parameterization $M(t) = C + t^2 D$, with $D$ positive semi-definite, is monotonially increasing (since $M(t_2) - M(t_1) = (t_2^2 -t_1^2) D$) as well as convex (since $M(\alpha t_1 + (1-\alpha) t_2) = \alpha M(t_1) + (1-\alpha)M(t_2) - \alpha (1-\alpha) (t_2-t_1)^2 D \cleq \alpha M(t_1) + (1-\alpha)M(t_2))$) in $\mathbb{R}_{\geq 0}$.
\end{example}

\begin{corollary} \label{corr:parameterizations}
	Let 
	$f:\bigtimes_{j=1}^m S_j\rightarrow\mathbb{R}$ (where $S_j \subset \mathbb{R}, ~j=1,2,\cdots,m$ are convex subsets of the real line).
	Let $M_l$ be Hermitian matrix parameterizations, parametrized by $t$.
	Define $\mathcal{T} = \{t ~|~ M_l(t) \in \mathcal{M}_{\text{herm}^n}(S_l), \forall\,l\in\{1,2,\cdots,m\} \} \subseteq \mathbb{R}$. 
	Define $g: \mathcal{T} \rightarrow \mathbb{R}$ as $g(t) = \tr\left(f\left((M_l(t))_{l=1}^m\right)\right)$.
	Then,
	\begin{itemize}
		\item[i.] If $f$ is of class $\mathcal{C}^m$ that is monotonically increasing with respect to each of its inputs and if $M_l(t), ~l=1,2,\cdots,m$ are monotonically increasing matrix parameterizations, then $g$ is monotonically increasing.
		\item[ii.] If $f$ is of class $\mathcal{C}^m$ that is monotonically increasing with respect to each of its inputs and is also convex in $\bigtimes_{j=1}^m S_j$, and if $M_l(t), ~l=1,2,\cdots,m$ are convex matrix parameterizations, then $g$ is convex.
		\item[ii.] If $f$ is of class $\mathcal{C}^0$ that is convex in $\bigtimes_{j=1}^m S_j$, and if $M_l(t), ~l=1,2,\cdots,m$ are affine matrix parameterizations, then $g$ is convex.
	\end{itemize}
\end{corollary}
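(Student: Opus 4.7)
The plan is to reduce each of the three claims to the previously established results: Proposition~\ref{prop:monotonic-multivar-hermitian} (monotonicity of $\tr\circ f$ in the Hermitian arguments) and Proposition~\ref{prop:convexity-multivar} (convexity of $\tr\circ f$ in the Hermitian arguments). No additional analytic machinery is needed — the work is purely to transport these ``matrix-argument'' inequalities through the given $t$-parameterizations.

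For part (i), I would fix $t_1,t_2\in\mathcal{T}$ with $t_2\geq t_1$. Monotonicity of each parameterization gives $M_l(t_2) \cgeq M_l(t_1)$ for every $l=1,\ldots,m$. Since all these matrices are Hermitian with eigenvalues in $S_l$, Proposition~\ref{prop:monotonic-multivar-hermitian} immediately yields $\tr(f((M_l(t_2))_{l=1}^m)) \geq \tr(f((M_l(t_1))_{l=1}^m))$, i.e.\ $g(t_2)\geq g(t_1)$.

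For part (iii), I would observe that an affine parameterization $M_l(t)=A_l+B_l t$ satisfies $M_l(\alpha t_1+(1-\alpha)t_2)=\alpha M_l(t_1)+(1-\alpha)M_l(t_2)$ exactly. Thus
\[
g(\alpha t_1+(1-\alpha)t_2)=\tr\Bigl(f\bigl((\alpha M_l(t_1)+(1-\alpha)M_l(t_2))_{l=1}^m\bigr)\Bigr),
\]
and applying Proposition~\ref{prop:convexity-multivar} to the right-hand side gives $g(\alpha t_1+(1-\alpha)t_2)\leq \alpha g(t_1)+(1-\alpha)g(t_2)$, as desired. (Convexity of $\mathcal{T}$ itself follows from Lemma~\ref{lemma:convex-set} applied coordinate-wise.)

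Part (ii) is the only one requiring a two-step argument and is the main (very mild) obstacle, because convex matrix parameterizations only give an inequality $M_l(\alpha t_1+(1-\alpha)t_2)\cleq \alpha M_l(t_1)+(1-\alpha)M_l(t_2)$, not equality. I would first apply monotonicity of $\tr\circ f$ in each Hermitian argument (using Proposition~\ref{prop:monotonic-multivar-hermitian} and the hypothesis that $f$ is componentwise monotonically increasing) to get
\[
\tr\Bigl(f\bigl((M_l(\alpha t_1+(1-\alpha)t_2))_{l=1}^m\bigr)\Bigr)\leq \tr\Bigl(f\bigl((\alpha M_l(t_1)+(1-\alpha)M_l(t_2))_{l=1}^m\bigr)\Bigr),
\]
and then apply Proposition~\ref{prop:convexity-multivar} (using the convexity hypothesis on $f$) to bound the right-hand side by $\alpha g(t_1)+(1-\alpha)g(t_2)$. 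Chaining the two inequalities proves convexity of $g$. A parallel argument handles the concave/monotonically-decreasing variant. The role of the $\mathcal{C}^m$ regularity is just to license the invocation of Proposition~\ref{prop:monotonic-multivar-hermitian}, which in turn relies on Proposition~\ref{prop:derivative-multi-var}.
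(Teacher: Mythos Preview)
Your proposal is correct and follows essentially the same approach as the paper's own proof: part (i) applies Proposition~\ref{prop:monotonic-multivar-hermitian} directly to $M_l(t_2)\cgeq M_l(t_1)$; part (iii) uses the exact affine identity $M_l(\alpha t_1+(1-\alpha)t_2)=\alpha M_l(t_1)+(1-\alpha)M_l(t_2)$ followed by Proposition~\ref{prop:convexity-multivar}; and part (ii) chains Proposition~\ref{prop:monotonic-multivar-hermitian} (to pass from $M_l(\alpha t_1+(1-\alpha)t_2)$ to the convex combination) with Proposition~\ref{prop:convexity-multivar}. Your additional remarks on the convexity of $\mathcal{T}$ and the role of the $\mathcal{C}^m$ hypothesis are accurate and slightly more explicit than what the paper records.
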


\begin{proof}
Let $t_1,t_2 \in\mathcal{T}$ and $\alpha\in[0,1]$.
	
	\vspace{0.5em}\noindent\emph{Proof of `i.':} Since $M_l$ are monotonically increasing matrix parameterizations, for any $t_1 \geq t_2$ we have $M_l(t_1) \cgeq M_l(t_2), \,\forall l=1,2,\cdots,m$.
	Since $f$ is monotonically increasing, using Proposition~\ref{prop:monotonic-multivar-hermitian} we have,
	$\tr\left(f\left( (M_l(t_1))_{l=1}^m \right)\right) \geq \tr\left(f\left( (M_l(t_2))_{l=1}^m \right)\right)$.
	
\vspace{0.5em}\noindent\emph{Proof of `ii.':}
	Since $M_l$ are convex matrix parameterizations, $M_l(\alpha t_1 + (1-\alpha) t_2) ~\cleq~ \alpha M_l(t_1) + (1-\alpha) M_l(t_2)$. Since $f$ is monotonically increasing, using Proposition~\ref{prop:monotonic-multivar-hermitian},
	\begin{eqnarray*}
	\tr\left(f\left( \left(M_l(\alpha t_1 + (1-\alpha)t_2 ) \right)_{l=1}^m\right)\right)
	& \leq &  \tr\left(f\left( \alpha (M_l(t_1))_{l=1}^m + (1-\alpha) (M_l(t_2))_{l=1}^m \right)\right) \\
	& \leq &  \alpha \tr\left(f\left( (M_l(t_1))_{l=1}^m \right)\right) + (1-\alpha) \tr\left(f\left( (M_l(t_2))_{l=1}^m \right)\right) \\ & & \qquad\qquad\qquad\qquad\text{(using Proposition~\ref{prop:convexity-multivar}, since $f$ is convex.)}
	\end{eqnarray*}

\vspace{0.5em}\noindent\emph{Proof of `iii.':}
	Since $M_l$ are affine matrix parameterizations, we can write $M_l(t) = C_l + D_l t$. Thus, $M_l(\alpha t_1 + (1-\alpha)t_2 ) = C_l + D_l (\alpha t_1 + (1-\alpha)t_2) = \alpha M_l(t_1) + (1-\alpha) M_l(t_2)$.
	Thus,
	\begin{eqnarray*}
		\tr\left(f\left( \left(M_l(\alpha t_1 + (1-\alpha)t_2 ) \right)_{l=1}^m\right)\right)
		& = &  \tr\left(f\left( \alpha (M_l(t_1))_{l=1}^m + (1-\alpha) (M_l(t_2))_{l=1}^m \right)\right) \\
		& \leq &  \alpha \tr\left(f\left( (M_l(t_1))_{l=1}^m \right)\right) + (1-\alpha) \tr\left(f\left( (M_l(t_2))_{l=1}^m \right)\right) \\ & & \qquad\qquad\qquad\qquad\text{(using Proposition~\ref{prop:convexity-multivar}, since $f$ is convex.)}
	\end{eqnarray*}
\end{proof}

A special case of the above corollary is affine matrix parameterizations, $M_l(t) = C_l + D_l t$, with $D_l \cgeq 0$. This makes $M_l$ monotonically increasing as well as convex. In this case $g$ is monotomically increasing if $f$ is monotonically increasing with respect to each input, and $g$ is convex if $f$ is convex.

\subsection{Evaluation on the Diagonal}

We now specialize the results from the previous sub-sections to the case when $M_1 = M_2 = \cdots = M_m = L$.
The following proposition is a direct consequence of Propositions~\ref{prop:monotonic-multivar-hermitian} and \ref{prop:convexity-multivar}.
\begin{proposition}
	Let $S \subseteq \mathbb{R}$ be an open, closed or clopen interval of the real line, and $f: S^m \rightarrow\mathbb{R}$.
	Define $F:\mathcal{M}_{\text{herm}^n}(S)\rightarrow \mathbb{R}$ as $F(L) = \tr\left(f(L,L,\cdots,L)\right) = \sum_{j_1,j_2,\cdots,j_m = 1}^n f(\lambda_{j_1}, \lambda_{j_2}, \cdots, \lambda_{j_m})$, where $\lambda_1.\lambda_2,\cdots,\lambda_n$ are the eigenvalues of $L$.
	\begin{itemize}
		\item[i.] If $f$ is of class $\mathcal{C}^m$ and is monotonically increasing (resp. decreasing) with respect to each of its inputs, then $F$ is monotonically increasing (resp. decreasing) in $\mathcal{M}_{\text{herm}^n}(S)$. That is, $L_1 \cgeq L_2 ~\Rightarrow~ F(L_1) \geq F(L_2)$ (resp. $\leq$).
		\item[ii.] If $f$ is of class $\mathcal{C}^0$ and is convex (resp. concave) in $S^m$, then $F$ is convex (resp. concave) in $\mathcal{M}_{\text{herm}^n}(S)$. That is, $F(\alpha L_1 + (1-\alpha)L_2) \,\leq\, \alpha F(L_1) + (1-\alpha) F(L_2)$ (resp. $\geq$)..
	\end{itemize}
\end{proposition}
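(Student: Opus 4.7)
The plan is to derive both parts as direct specializations of Propositions~\ref{prop:monotonic-multivar-hermitian} and \ref{prop:convexity-multivar} by setting $M_l = L$ for every $l = 1, 2, \ldots, m$. Since the hypotheses on $f$ in the statement are identical to those of the earlier propositions, no new analytic work on $f$ is needed; the proof reduces to verifying that the diagonal substitution carries the order and convex-combination relations in $\mathcal{M}_{\text{herm}^n}(S)$ across to the corresponding relations on the product space $\bigtimes_{j=1}^m \mathcal{M}_{\text{herm}^n}(S)$.

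For part (i), given $L_1 \cgeq L_2$ in $\mathcal{M}_{\text{herm}^n}(S)$, I will apply Proposition~\ref{prop:monotonic-multivar-hermitian} to the tuples $(M_l)_{l=1}^m = (L_1, L_1, \ldots, L_1)$ and $(N_l)_{l=1}^m = (L_2, L_2, \ldots, L_2)$. The required componentwise inequality $M_l \cgeq N_l$ for each $l$ holds trivially since it is just $L_1 \cgeq L_2$ repeated $m$ times. The conclusion of the cited proposition then reads $\tr(f((L_1)_{l=1}^m)) \geq \tr(f((L_2)_{l=1}^m))$, which is exactly $F(L_1) \geq F(L_2)$.

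For part (ii), I will apply Proposition~\ref{prop:convexity-multivar} to the same kind of diagonal tuples. The key identity driving the argument is
\[ \alpha\,(L_1, L_1, \ldots, L_1) + (1-\alpha)\,(L_2, L_2, \ldots, L_2) = (\alpha L_1 + (1-\alpha) L_2, \ldots, \alpha L_1 + (1-\alpha) L_2), \]
so that the left-hand side of the convexity inequality furnished by Proposition~\ref{prop:convexity-multivar} collapses to $F(\alpha L_1 + (1-\alpha) L_2)$, while the right-hand side is $\alpha F(L_1) + (1-\alpha) F(L_2)$. The concave and monotonically-decreasing cases follow by applying the same argument to $-f$.

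There is no substantive obstacle to overcome; the proof is essentially a bookkeeping exercise. The only point worth noting explicitly is that the diagonal tuple $(L, L, \ldots, L)$ lies in $\bigtimes_{j=1}^m \mathcal{M}_{\text{herm}^n}(S)$ whenever $L \in \mathcal{M}_{\text{herm}^n}(S)$, so the hypotheses of the cited propositions are indeed satisfied. In this sense the statement could just as well be labeled a corollary of the previous two propositions.
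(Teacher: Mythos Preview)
Your proposal is correct and matches the paper's approach exactly: the paper states just before this proposition that it ``is a direct consequence of Propositions~\ref{prop:monotonic-multivar-hermitian} and \ref{prop:convexity-multivar}'' and gives no further proof, so your diagonal-substitution argument simply spells out the details the paper leaves implicit.
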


\section{Application to Function of Graph Spectrum}

In many problems relevant to optimization of graphs with edge weights, one needs to consider a function of the spectrum of the weighted graph Laplacian matrix (for example, see \citep{sahin2024resonance}).
The Laplacian matrix, $L$, of an undirected, simple graph is a real, symmetric positive semi-definite matrix, the $(i,j)$-th element of which is defined as
\[ [L]_{ij} = \left\{ \begin{array}{ll} -w_{ij}, & \text{if $i\neq j$}, \\ \sum_{k=1}^n w_{ik}, & \text{if $i= j$.} \end{array} \right.\]
where $w_{ij}$ is the (real, non-negative) weight on the edge connecting vertices $i$ and vertex $j$ if the edge exists, and zero otherwise.
This allows us to decompose the Laplacian matrix as
\begin{equation*}
L ~=~ \sum_{\{i,j\}\in \mathcal{E}} w_{ij} W_{ij}
\end{equation*}
where $\mathcal{E}$ is the set of (undirected) edges of the graph, and $W_{ij}$ is an $n\times n$ matrix with $1$ at the $i$-th and $j$-th diagonal elements, $-1$ at the $(i,j)$-th and $(j,i)$-th elements, and zero everywhere else.

This makes $L$ affine in each of the edge weights, $w_{ij}$.
Furthermore, it is easy to check that $W_{ij}$ are positive semi-definite of rank $1$ (one eigenvalue equals to $2$, rest are zero).
So, considering $L$ to be parameterized by the edge weight $w_{ij}$, it is a monotonically increasing affine parameterizations.
The following thus follows from Corollary~\ref{corr:parameterizations}.

\begin{proposition}
	Let $L$ be the weighted Laplacian matrix of an undirected, simple graph with non-negative edge weights.
	Consider a function of the form $F(L) = \sum_{k_1,k_2,\cdots,k_m = 1}^n f(\lambda_{k_1}, \lambda_{k_2}, \cdots, \lambda_{k_m})$ (where $\lambda_1.\lambda_2,\cdots,\lambda_n$ are the eigenvalues of $L$ and $f:\mathbb{R}_{\geq 0}^m \rightarrow \mathbb{R}$ is any real-valued function). 
	\begin{itemize}
		\item[i.] If $f$ is of class $\mathcal{C}^m$ and monotonically increasing with respect to each of its inputs, then $F(L)$ is monotonically increasing with respect to the edge weights.
		\item[ii.] If $f$ is of class $\mathcal{C}^0$ and convex, then $F(L)$ is convex with respect to the edge weights.
	\end{itemize}
\end{proposition}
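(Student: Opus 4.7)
The plan is to bootstrap directly from the ``Evaluation on the Diagonal'' proposition together with the explicit decomposition $L = \sum_{\{i,j\}\in\mathcal{E}} w_{ij} W_{ij}$ that the paper has just written down. The key observation is that this decomposition makes $L$ \emph{affine} in the vector of edge weights $\mathbf{w} = (w_{ij})_{\{i,j\}\in\mathcal{E}}$, and that each coefficient matrix $W_{ij}$ is positive semi-definite. These two facts are exactly what is needed to translate matrix-level (partial order / Löwner) monotonicity and convexity of $F$ in $L$ into scalar monotonicity and convexity in the $w_{ij}$'s.

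For part (i), I would fix an arbitrary edge $\{i,j\}\in\mathcal{E}$ and hold all other edge weights constant, writing $L(t) = L_0 + t\, W_{ij}$ where $t=w_{ij}$ and $L_0 = \sum_{\{i',j'\}\neq\{i,j\}} w_{i'j'} W_{i'j'}$. Since $W_{ij} \cgeq 0$, increasing $t$ yields $L(t_2)-L(t_1) = (t_2-t_1) W_{ij} \cgeq 0$ whenever $t_2 \geq t_1$, i.e.\ $L(t_2) \cgeq L(t_1)$. The Evaluation on the Diagonal proposition (part i.) then gives $F(L(t_2)) \geq F(L(t_1))$ under the monotonicity hypothesis on $f$. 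Since $\{i,j\}$ was arbitrary, $F$ is monotonically increasing in each edge weight separately, which is what the statement asserts.

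For part (ii), let $\mathbf{w}, \mathbf{w}'$ be two vectors of non-negative edge weights and let $\alpha\in[0,1]$. By linearity of the decomposition of $L$ in the weights,
\begin{equation*}
L(\alpha \mathbf{w} + (1-\alpha)\mathbf{w}') ~=~ \alpha\, L(\mathbf{w}) + (1-\alpha)\, L(\mathbf{w}'),
\end{equation*}
so the convex combination of weight vectors maps to the convex combination of Laplacians. Both $L(\mathbf{w})$ and $L(\mathbf{w}')$ are real symmetric (hence Hermitian) and the non-negativity of the weights keeps their eigenvalues in $\mathbb{R}_{\geq 0}$, so they lie in $\mathcal{M}_{\text{herm}^n}(\mathbb{R}_{\geq 0})$. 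Applying the convexity part (ii.) of the Evaluation on the Diagonal proposition to these two Hermitian matrices yields $F(L(\alpha \mathbf{w} + (1-\alpha)\mathbf{w}')) \leq \alpha F(L(\mathbf{w})) + (1-\alpha) F(L(\mathbf{w}'))$, which is exactly convexity of $F$ as a function of the edge weights.

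The argument is essentially a two-line translation once the setup is in place, so there is no genuine obstacle — the real work has already been done in Propositions~\ref{prop:monotonic-multivar-hermitian} and~\ref{prop:convexity-multivar} and their diagonal specialization. The only points that warrant a brief mention are (a) the need to verify that the relevant Laplacians remain in $\mathcal{M}_{\text{herm}^n}(\mathbb{R}_{\geq 0})$ (which follows because a non-negative combination of positive semi-definite matrices is positive semi-definite), and (b) that partial monotonicity in each edge weight separately is what ``monotonically increasing with respect to the edge weights'' should mean in this context, since the weight vector lives in $\mathbb{R}_{\geq 0}^{|\mathcal{E}|}$ which has no canonical total order. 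Alternatively, one can invoke Corollary~\ref{corr:parameterizations} directly with the affine parameterization $L(t) = L_0 + t W_{ij}$ (for monotonicity) and with an affine parameterization along a line segment in weight-space (for convexity), which is how the paper has framed things.
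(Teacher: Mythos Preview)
Your proposal is correct and follows essentially the same route as the paper: the paper's ``proof'' is just the one-line observation preceding the proposition that $L=\sum_{\{i,j\}\in\mathcal{E}} w_{ij}W_{ij}$ is a monotonically increasing affine parameterization in each edge weight (since $W_{ij}\cgeq 0$), after which it invokes Corollary~\ref{corr:parameterizations}. You unpack the same mechanism slightly more explicitly via the Evaluation on the Diagonal proposition (and even note the Corollary~\ref{corr:parameterizations} alternative yourself), so there is no substantive difference.
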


\newpage
\bibliographystyle{apalike}
\bibliography{refs}

\end{document}